\DeclareSymbolFont{largesymbol}{OMX}{yhex}{m}{n}
\DeclareMathAccent{\Widehat}{\mathord}{largesymbol}{"62}
\newcommand{\bm}{\boldsymbol}
\theoremstyle{plain}
\newtheorem{theorem}{Theorem}
\newtheorem{lemma}{Lemma}
\begin{document}

\title{On the Asymptotic Behavior of the Kernel Function in the Generalized Langevin Equation: A One-dimensional lattice model
\thanks{This research was supported by NSF under grant DMS-1522617 and DMS-1619661. }
}


\author{Weiqi Chu    \\
The Pennsylvania State University \\   {wzc122@psu.edu}   \and
        Xiantao Li \\ 
        The Pennsylvania State University \\
        {xxl12@psu.edu}
}


%
%
\maketitle

\begin{abstract}
We present some estimates for the memory kernel function in the generalized Langevin equation, derived using the Mori-Zwanzig
formalism from a one-dimensional lattice model, in which the particles interactions are 
through nearest and second nearest neighbors. The kernel function can be
explicitly expressed in a matrix form. The analysis focuses on the decay properties, both spatially and temporally, revealing
a power-law behavior in both cases. The dependence on the level of coarse-graining is also studied.
\end{abstract}

\section{Introduction}\label{intro}
The molecular dynamics models, often written in the form of a Hamiltonian system or a Langevin dynamics model, 
are playing increasingly important roles in understanding the structural properties of macromolecules and material systems \cite{Leach01}. 
There is also enormous interest in reducing the enormous degrees of freedom associated with such models to a few coarse-grained variables that are able to describe the overall collective properties. The Mori-Zwanzig (MZ) formalism \cite{ChHaKu02,ChHaKu00,Mori65,Zwanzig73} has recently re-emerged as a systematic methodology for such dimension-reduction purposes. In principle, the MZ formalism yields an {\it{exact}} equation, and with appropriate approximations, the equation can be written as a generalized Langevin equation (GLE).
A distinct feature of the GLEs is an integral that incorporates the history-dependence, and a random noise term, which satisfies the second fluctuation-dissipation theorem \cite{Kubo66}. 

The GLEs have opened up a new paradigm in molecular modeling. Numerous attempts have been made to derive the GLEs
from the full molecular models, and the models have been applied to solid-gas interface \cite{AdDo76,AdDo74}, protein dynamics \cite{berkowitz1981memory,CuCe02,lange2006collective,oliva2000generalized,stepanova2007dynamics}, crystalline solids \cite{kauzlaric2011bottom,Li2009c,Li14}, etc. In general, little is known about the mathematical properties of the memory kernel. A very interesting experimental work \cite{min2005observation,Kou_Xie_PRL_2004} has suggested that part of a protein dynamics can be described by generalize Langevin equations, in which the kernel function exhibits a decay in time that is proportional to $t^{-0.51}.$ 

On the other hand, it is of great practical interest to introduce approximations to the kernel function, to obtain an efficient method for solving the generalize Langevin equations. For instance, a brute force solution method would have to store the solution for all previous steps, and then at every time step, the integral has to be evaluated, which further adds up to the overall computational cost. It is important to understand properties of the kernel functions so that appropriate approximations can be introduced.  

Motivated by these observations, we present several analytical results, with particular emphasis on the following questions:
\begin{enumerate}
\item How does the memory function depend on the choice of the coarse-grained variables?
\item How would the kernel function behave at different levels of coarse-graining? 
\item How does the memory function decay in space and time?
\end{enumerate}

To be able to quantify these aspects, we consider a one-dimensional lattice model, in which the particles are connected by
linear springs. In this case, the generalize Langevin equations can be derived and the kernel functions can be explicitly expressed as matrices or operator functions. The coarse-grained variables can be defined based on local averages within equally partitioned blocks. There are also different
ways to define weights of the averaging, ranging from piecewise constant, overlapping hat functions, or piecewise quadratic functions. These constructions have been motivated by the finite element shape functions. 

To address the first two issues, we study the memory function at $t=0,$ by varying the block size, denoted by $M$. $M$ would represent the coarse scale that we are upscaling to, or the level of coarse-graining. This is done for both piecewise constant and piecewise linear weighting functions. We found that the memory function in the second case decays faster as $M\to+\infty.$ For the last issue, we hold $M$ fixed, and analyze the spatial and temporal decay of the memory function. We found that the decay rate in time is $1/2,$ which seems to be close to the experimental observation \cite{min2005observation,Kou_Xie_PRL_2004}. It is also interesting that the generalized Langevin equation derived by Adellman and Doll \cite{AdDo76} exhibits the same decay rate.

The remaining part of the paper is organized as follows. In section 2, we present the one-dimensional model, along with the generalized Langevin equation. The kernel function will be represented in terms of its Fourier transform. In section 3, we present the analysis of several properties of the kernel function, along with the verification via numerical tests.

\section{A one-dimensional model and the generalized Langevin equation}
\label{sec1:model}
\subsection{The one-dimensional lattice dynamics model}

We focus our analysis on a one-dimensional lattice model that has served as an excellent test model to study heat conduction, fracture, analysis of modeling errors, etc., e.g., in \cite{lepri2003thermal,lepri1998anomalous,DobsonLuskin:2009b,FuTh78,LiMing:2013,MingYang:2009} and many related works. This model consists of a chain of atoms, with the position of $j$th atom denoted by $r_j$, $j\in \mathbb{Z}$. The equations of motion are expressed as
\begin{equation}
m\ddot{r}_{j} = - \frac{\partial V({r})}{\partial r_{j}}, \quad j\in \mathbb{Z}.
\label{eq:newton}\end{equation}
Here we have used $V({r})$ for the atomic interactions among the atoms.

As a specific example, we assume that particles have direct interactions among first and second neighbors. The potential energy can be written as
\begin{equation}
V = \frac{1}{2} \sum_{j\in \mathbb{Z}} \varphi(r_{j+1}-r_{j}) + \varphi(r_{j+2}-r_{j}),
\label{eq:potential}
\end{equation}
where $\varphi(r)$ is the potential energy associated with two interacting atoms of distance $r$. Atoms typically move around their equilibrium positions, denoted by $R_j=ja_0$, and $a_0$ is the equilibrium atom spacing (aka lattice constant). We define $u_{j} = r_{j} - ja_0$ as the displacement. We also set $m=1$ for simplicity. 

By expanding \eqref{eq:newton} around equilibrium positions $R_j$ and neglecting high order terms, one can have
\begin{equation}
\ddot{{u}} = -{\cal A} {u},
\label{eq:linear}
\end{equation}
where ${u}$ represents the displacement of all the atoms and $\cal{A}$ is a finite difference operator, defined as
\begin{equation}
 \big({\cal A}{u}\big)_j\overset{\rm def}{=} -\kappa_{2} u_{j-2} - \kappa_{1} u_{j-1} + \kappa_{0} u_j - \kappa_{1} u_{j+1} - \kappa_{2} u_{j+2},
 \label{eq: opA}
\end{equation}
where $\kappa_{1} = \frac{1}{2}\varphi''(a_0)$, $\kappa_{2}=\frac{1}{2}\varphi''(2a_0)$ and $\kappa_{0}=2(\kappa_{1}+\kappa_{2})$. This linear approximation of the atomic forces is known as the harmonic approximation \cite{ashcroft1976solid}, and it is a useful routine to obtain elastic parameters, the phonon dispersion relations, stability conditions, etc. 

We assume that $\cal A$ is positive semidefinite in appropriate function spaces, which is pertinent to the stability of the lattice structure \cite{weinan2007cauchy}. For the model \eqref{eq: opA}, this amounts to the following phonon stability conditions:
\begin{equation}
\kappa_1 > \text{ and } \kappa_{1} + 4\kappa_{2} >0.
\label{eq:stability}
\end{equation}
In fact, it is not difficult to verify that $\cal A$ is a discrete Laplacian operator. 

\subsection{The generalized Langevin equation as a coarse-grained model}

We now discuss the reduction of the full dynamics model \eqref{eq:linear}, and the emergence of the generalized Langevin equation. Known as {\it coarse-graining}, such a procedure is central to modern molecular modeling. The first step in this effort is to map the atom position and velocity to a set of coarse-grained variables. For this purpose, we will introduce some averaging operators. We let the one-dimensional infinite lattice be $\mathbb{L}$; $\mathbb{L}=a_0 \mathbb{Z}.$ We partition the entire lattice into blocks, each of which contains $M$ atoms and use $\mathbb{L}_{CG}=M\mathbb{L}$ to represent the coarse-grained space. The coarse-grained variables will be defined with appropriate local spatial averaging.

A natural idea is to extract the usual average from a block as the coarse-grained variable. In this case, the weights are called piecewise constant weighting functions: $1/M$ for atoms within the block, and zero otherwise, as shown in Fig. \ref{fig: const}.

\begin{figure}[htp]
\begin{center}
\fbox{
\begin{tikzpicture}
\draw[-] (-3,0) -- (5,0) ; 
\draw[-] (0,1) -- (2,1) ; 
\foreach \x in {0,2} 
\draw[-,dashed] (\x,0) -- (\x,1) node[above] {};
\draw[shift={(-2,0)},color=black] (0pt,0pt) -- (0pt,-4pt) node[below] {$-M$};
\draw[shift={(0,0)},color=black] (0pt,0pt) -- (0pt,-4pt) node[below] {$0$};
\draw[shift={(2,0)},color=black] (0pt,0pt) -- (0pt,-4pt) node[below] {$M$};
\draw[shift={(4,0)},color=black] (0pt,0pt) -- (0pt,-4pt) node[below] {$2M$};

\end{tikzpicture}}
\centering
\caption{The constant weights for the averaging over one block.}
\label{fig: const}
\end{center}
\end{figure}
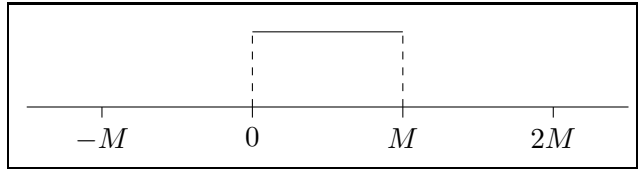

On the other hand, one may choose non-uniform weighting functions, e.g., as shown in Fig. \ref{fig: linear}, the weights are overlapping hat functions and the averaging is over two adjacent blocks. We call it piecewise linear averaging. This is motivated by the restriction operator in multigrid methods \cite{brenner2007mathematical}.


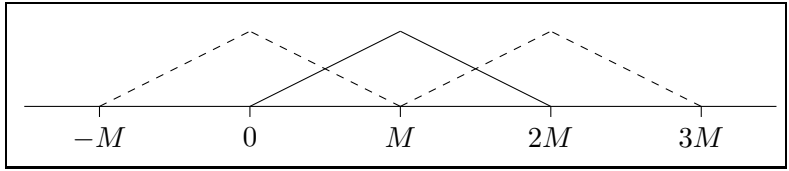
\begin{figure}[htp]
\begin{center}
\fbox{

\begin{tikzpicture}

\draw[-] (-3,0) -- (7,0) ; 

\draw[-,dashed] (-2,0) -- (0,1) node[above] {}; \draw[-,dashed] (2,0) -- (0,1) ;
\draw[-,dashed] (2,0) -- (4,1) node[above] {}; \draw[-,dashed] (6,0) -- (4,1) ;
\draw[-] (0,0) -- (2,1) ; \draw[-] (4,0) -- (2,1) ;

\draw[shift={(-2,0)},color=black] (0pt,0pt) -- (0pt,-4pt) node[below] {$-M$};
\draw[shift={(0,0)},color=black] (0pt,0pt) -- (0pt,-4pt) node[below] {$0$};
\draw[shift={(2,0)},color=black] (0pt,0pt) -- (0pt,-4pt) node[below] {$M$};
\draw[shift={(4,0)},color=black] (0pt,0pt) -- (0pt,-4pt) node[below] {$2M$};
\draw[shift={(6,0)},color=black] (0pt,0pt) -- (0pt,-4pt) node[below] {$3M$};

\end{tikzpicture}}
\caption{The piecewise linear weights for the averaging over blocks.}
\label{fig: linear}
\end{center}
\end{figure}

Let $X=\ell^2(\mathbb{L})$ be the space for the atomic displacement. Clearly, the weighting functions span a subspace, denoted by $Y$, and $Y\subset X$. Essentially, there also exists a bijection between $Y$ and $\ell^2(\mathbb{L}_{\text{CG}})$ and the averaging is a mapping from $X$ to $Y$ and defines the coarse-grained variables. 

To start with, we define a surjective operator $\Phi:X\rightarrow Y$, the matrix representation of which has columns that are a set of orthonormal basis of $Y$. The coarse-grained variables are defined in a compact form,
\begin{equation}\begin{aligned}
{q}=\Phi^{\intercal}{u}, \\
{p} = \Phi^{\intercal}{v},
\end{aligned}\end{equation}
where $\Phi^\intercal$ is the adjoint operator of $\Phi$, and ${v}=\dot{{u}}$ can be interpreted as the velocity. 

It is also useful to consider the orthogonal complement $Y^\perp$, which contains the `extra' degrees of freedom. Let $\Psi$ be a surjective from $X$ to $Y^\perp$. When $\Psi$ is seen as an infinite dimensional matrix, columns of $\Psi$ consist of a set of orthogonal basis of $Y^{\perp}$. We have a unique decomposition in the form of
\begin{equation}\begin{aligned}
{u} = \Phi {q}+\Psi {\xi}, \\
{v} = \Phi {p} + \Psi {\eta}. 
\end{aligned}
\end{equation}

Once the coarse-grained variables are selected, the Mori-Zwanzig formalism can be invoked to derive a reduced model, in the form of generalized Langevin equations, 
\begin{equation}\label{eq: GLE}
\left\{
\begin{aligned}
  \dot{{q}}=& ~{p},\\
  \dot{{p}}= &-{\cal A}{q} - \int_0^t \Theta(t-\tau) {p}(\tau) d\tau + {R}(t). 
\end{aligned}\right.
\end{equation}
In particular, the kernel function is given by \cite{lee2017semi},
\begin{equation}
\Theta(t) = \Phi^{\intercal}{\cal A}\Psi\cos(\Omega t)\Omega^{-2}\Psi^{\intercal}{\cal A}\Phi,
\label{eq:kernel}
\end{equation}
where $\Omega^{2}=\Psi^{\intercal}{\cal A}\Psi$ and the cosine function is defined by the Taylor series, 
\begin{equation}
\cos(\Omega t)=I - \frac{(\Omega t)^2}{2!} +\frac{(\Omega t)^4}{4!} - \frac{(\Omega t)^6}{6!} + \cdots.
\end{equation}
Furthermore, ${R}(t)$ is a stationary Gaussian process. Implicit in the equation \eqref{eq: GLE}, there is an important relation between the frictional kernel $\Theta$ and the random force, known as the second fluctuation-dissipation theorem \cite{Kubo66}, \begin{equation}\label{eq: 2FDT}
 \Big\langle {R}(t) {R}(t)^\intercal \Big\rangle= k_B T \Theta(t-t'),
\end{equation}
where $T$ is the temperature and $k_B$ is the Boltzmann constant. This condition is necessary for the system to equilibrate to its correct probability distribution.

For the detailed derivations, see \cite{LiXian2014,Li14,Li2009c}. Similar derivations can be found under other settings \cite{AdDo76,AdDo74,lee2017semi,LiE07,LiE06}.

\noindent{\bf Remark:} It is worthwhile to point out that we have selected the infinite lattice dynamics as the full model. This is because for finite systems, the kernel function often exhibits a `recurrence' phenomenon. Namely, the value of the kernel function first decays, and then after some time, the values go back up. The recurrence time increases as the system size increases. But it is difficult to have a decay property within a finite system.

\medskip

As alluded to in the introduction, it is useful to understand how the kernel function decays in space and time, and also the dependence on the level of coarse-graining, i.e., $M$. Our next task is to estimate the asymptotic behaviors of $\Theta(t)$ from different perspectives.

From the definitions, operators ${\cal A}$ and $\Phi$ are Toeplitz forms \cite{Ciarlet1970} and exhibit translational symmetry. Use Fourier transform can simplify the expressions a lot. For this purpose, we see $\cal A$ as an operator from $\ell^2 \left(\mathbb{L}_{\text{CG}}\right)^M$ to $\ell^2(\mathbb{L}_{\text{CG}})^M$ and look at its matrix representation form. We split $\cal A$ into block matrices of size $M$-by-$M$. Since atoms only interact within the same block or neighboring blocks, overall, $\cal A$ has a block tridiagonal structure. In fact, $\cal A$ is a Toeplitz form generated by the sequence,
\begin{equation}
\left( \cdots,0,0, A_{-1}^{\intercal},A_0^{\intercal},A_1^{\intercal},0,0,\cdots \right) ^{\intercal},
\end{equation} 
where $A_{\text{-}1},A_{0}$ and $A_{1}$ belong to $\mathbb{R}^{M\times M}$.

As a result, $\cal A$ can be formally expressed in the matrix form
\[\cal{A}=\left[
\begin{array}{ccccccc}
 \ddots & \ddots 	& \ddots &  & & \\
\cdots &A_1 & A_0 & A_{\text{-1}} & 0 &0 &\cdots\\
\cdots &0&A_1 & A_0 &  A_{\text{-1}} &0&\cdots\\
\cdots &0 &0 & A_1 & A_0 & A_{\text{-1}} &\cdots\\
& & & & \ddots& \ddots&\ddots \\
\end{array}\right].
\]
The diagonal block, denoted by $A_0$, is a band matrix with bandwidth 5, containing the force constants,
\[
A_0=\left[
\begin{array}{ccccccccc}
\kappa_{0} & \text{-}\kappa_{1} & \text{-}\kappa_{2} &  0 & 0 & 0  &0  & \cdots & 0\\
 \text{-}\kappa_{1} & \kappa_{0} & \text{-}\kappa_{1} & \text{-}\kappa_{2} &  0 & 0  &0    & \cdots  & 0\\
\text{-}\kappa_{2} & \text{-}\kappa_{1} & \kappa_{0} & \text{-}\kappa_{1} & \text{-}\kappa_{2} &  0 & 0  &\cdots  & 0\\
 0 & \text{-}\kappa_{2} & \text{-}\kappa_{1} & \kappa_{0} & \text{-}\kappa_{1} & \text{-}\kappa_{2} &  0 & \cdots  & 0\\
0 & 0 & \text{-}\kappa_{2} &\text{-}\kappa_{1} &\kappa_{0}&\text{-}\kappa_{1} &\text{-}\kappa_{2} &\cdots & 0\\
\vdots &\vdots &  &\ddots &\ddots &\ddots &\ddots &\ddots &\vdots\\
0 & 0 & \cdots &\cdots & \text{-}\kappa_{2} &  \text{-}\kappa_{1}  & \kappa_{0} & \text{-}\kappa_{1} &\text{-}\kappa_{2}\\
0 & 0 & \cdots &\cdots & 0 &  \text{-}\kappa_{2}  & \text{-}\kappa_{1} & \kappa_{0} & \text{-}\kappa_{1}\\
0 & 0 & \cdots &\cdots & 0 & 0 & \text{-}\kappa_{2} & \text{-}\kappa_{1} &\kappa_{0}\\
\end{array}\right].
\]
The off-diagonal block, denoted by $A_{1}$, only has three non-zero entries,
\begin{equation*}\begin{aligned}
&A_{1}(1,M) = -\kappa_{1},\\
&A_{1}(1,M -1) = A_1(2,M) = -\kappa_{2},
\end{aligned}\end{equation*}
and $A_{\text{-1}}={A}^{\intercal }_1$.

In accordance with the partition of $\cal A$, one can always regard ${f}\in \ell^2(\mathbb{L})$ as a function in $\ell^2(\mathbb{L}_{\text{CG}})^M$, denoted by ${\bm{f}}$. We break ${f}$ into blocks, with one block containing $M$ elements and assemble blocks into an array $\bm f$, with $\bm{f}_n\in \mathbb{R}^M$ being the $n$th component of $\bm{f}$, i.e.,
\begin{equation}
\bm{f}_n = (f_{nM},f_{nM+1},\cdots,f_{nM+M-1})^{\intercal}.
\end{equation}

Now we can define the Fourier transform of $\bm{f}$ at the level of blocks,
\begin{equation}
\Widehat{\bm f}(\xi) = \sum_{n}e^{-in\xi} \bm f_{n}, \quad ~\xi \in (0,2\pi],
\end{equation}
with inverse given by,
\begin{equation}
 \bm f_{n} = \frac{1}{2\pi} \int_0^{2\pi} \Widehat{\bm f}(\xi)e^{in\xi} d\xi.
\end{equation}

Similarly, one can also define the Fourier transform of Toeplitz forms in terms of its matrix representation. Suppose $G$ is a Toeplitz form, whose columns of blocks are generated by a column: $$\bm G=(\cdots, G_{-2}^{\intercal},G_{-1}^{\intercal},G_{0}^{\intercal},G_{1}^{\intercal},G_{2}^{\intercal},\cdots)^{\intercal}.$$
The Fourier transform of the operator $G$ is defined as
\begin{equation}
 \Widehat{G}(\xi) = \sum_{n}e^{-in\xi}G_{n},\quad ~\xi \in (0,2\pi].
\label{eq:Fourierform}
\end{equation}

Based on the block form, the Fourier transformation of $\cal A$ is simply defined as
\begin{equation}
\Widehat{\cal A}(\xi)=A_{0}+e^{-i\xi}A_{1}+e^{i\xi}A_{1}^{\intercal}, \quad ~\xi \in (0,2\pi].
\end{equation}

From the perspective in solid state physics, grouping atoms in blocks is equivalent to building a complex lattice with $M$ atoms in one primitive cell. The Fourier domain $(0, 2\pi]$ is equivalent to the first Brillouin zone $(-\pi,\pi]$.

In terms of the Fourier transforms, it is not difficult to establish
\begin{equation}
 \Widehat{{\cal A}\bm{f}}= \Widehat{\cal A}\Widehat{\bm{f}}\quad \text{and} \quad \Widehat{GH}=\Widehat{G}\Widehat{H}
 \end{equation}
where $G$ and $H$ have consistent domains and ranges.

Notice that the kernel function \eqref{eq:kernel} is represents an operator from $\ell^2(\mathbb{L}_{\text{CG}})$ to $\ell^2(\mathbb{L}_{\text{CG}})$. Therefore, for analysis, it is necessary to convert the system into a finite-dimensional one. The Fourier transformation in terms of blocks
is a useful tool to achieve this goal. 

Applying Fourier transform to the kernel function in \eqref{eq:kernel}. With direct computation, we find that the $(k,\ell)$-th entry of $\Theta(t)$ can be written as
\begin{equation}
\Theta_{k\ell}(t) = \frac{1}{2\pi}\int_{0}^{2\pi}e^{i(\ell-k)\xi}\Widehat{\Phi}^{*}\Widehat{\cal A}\Widehat{\Psi}\cos(\Widehat{\Omega}t)\left(\Widehat{\Psi}^{*}\Widehat{\cal A}\Widehat{\Psi}\right)^{-1}\Widehat{\Psi}^{*}\Widehat{\cal A}\Widehat{\Phi}~d\xi,
\label{eq:lakernel}
\end{equation}
where $\Widehat{\Phi}, \Widehat{\Psi}$ and $\Widehat{\Omega}$ are defined based on \eqref{eq:Fourierform}. Our asymptotic analysis will be established based on the representation of the kernel function in \eqref{eq:lakernel}.

\section{Asymptotic behavior of kernel functions}
\label{asymbeha}
The kernel function depends on the choice of the CG variables, which can be defined based on the averaging methods within the blocks. The kernel function also depends on the block size $M$, which shows the level of coarse-graining. 

Here, the dependence of the block size is discussed for two types of averaging, piecewise constant averaging and piecewise linear averaging, which are two most common and intuitive ways in the finite element methods \cite{brenner2007mathematical}. In this section, we first study the asymptotic behavior of $\Theta_{0,0}(0)$ in the kernel function as the block size $M$ goes to infinity.

Meanwhile, the coarse-grained variables are defined based on different weighting functions. In its matrix form, every column of $\Phi$ represents a weighting function centered at one block. The correlation between two blocks is expected to become smaller as the distance of the two blocks increases. This corresponds to the entry $\Theta_{0,J}(0)$. We will analyze how the kernel function decays as $J$ goes to infinity.

Finally, we also analyze the dependence of the kernel functions on time $t$ when other parameters are fixed, such as the weighting functions and the block size. $\Theta_{0,0}(t)$ also decays over a long time period and we estimate its decay rate.

\subsection{The dependence on the block size $M$}

\subsubsection{Piecewise constant weighting function}
Recall that $\Phi$ is a Toeplitz form, arising from the piecewise constant averaging. $\Phi$ can also be seen as an infinite dimensional block diagonal matrix, with diagonal block equal to
\begin{equation}
Q_{1}=\frac{1}{\sqrt{M}}\left(1,1,\cdots,1\right)^{\intercal},
\end{equation}
and $Q_{1} \in \mathbb{R}^{M}.$ It is not difficult to find $Q_{2}\in \mathbb{R}^{M\times (M\text{-}1)}$ whose column vectors are a set of orthonormal basis in the complementary subspace spanned by $Q_{1}$, i.e., $[Q_1 \; Q_2]\in \mathbb{R}^{M\times M}$ is an orthogonal matrix. $\Psi$ can be defined as a block diagonal form with diagonal block equal to $Q_{2}$. 

According to \eqref{eq:lakernel}, in terms of $Q_1$ and $Q_2$, the diagonal entry of $\Theta$ can be written as
\begin{equation}
\Theta_{0,0}(0) = \frac{1}{2\pi}\int_{0}^{2\pi} Q_{1}^{\intercal}\Widehat{\cal A}Q_{2}\left(Q_{2}^{\intercal}\Widehat{\cal A}Q_{2}\right)^{\!-1}\!\!Q_{2}^{\intercal}\Widehat{\cal A}Q_{1}~d\xi.
\label{eq:constest}
\end{equation}

On the right-hand side of \eqref{eq:constest}, $Q_1$ comes from the weighting functions and is known to us. Provided the orthogonality of $Q_{1}$ and $Q_{2}$, the form of $Q_2$ is not unique. So it is necessary to show that $\Theta_{0,0}(0)$ in \eqref{eq:constest} does not depend on the specific choice of $Q_2$. Another difficulty is that the expression contains the inverse of $Q_{2}^{\intercal}\Widehat{\cal A}Q_{2}$ which is not easy to compute in general.

Fortunately, we have found that $Q_1$ and $Q_2$ have properties that allow us to resolve these issues. We begin with the following formula.

\begin{lemma} 
[Inverses of $2$-by-$2$ block matrices \cite{LU2002119}]
Suppose $C$ and $D$ are both 2-by-2 block matrices. $C_{11}$ and $D_{11}$ belong to $\mathbb{C}^{n_{1}\times n_{1}}$ and $C_{22}$ and $D_{22}$ belong to $\mathbb{C}^{n_{2}\times n_{2}}$. If $CD=I$ and $D_{11}, D_{22}$ are invertible, then the Shur Complement $D_{22}-D_{21}D_{11}^{-1}D_{12}$ is invertible and 
\begin{equation}
C_{22} = \left(D_{22}\!-\!D_{21}D_{11}^{-1}D_{12}\right)^{-1}.
\label{eq:inverse}
\end{equation}
\label{lm1}
\end{lemma}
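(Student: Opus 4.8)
The plan is to reduce the claim to an explicit formula for a single block of an inverse matrix. Since $CD=I$ and both $C$ and $D$ are square of the common size $(n_1+n_2)\times(n_1+n_2)$, we automatically have $C=D^{-1}$, so the lemma merely records the $(2,2)$ block of $D^{-1}$ produced by block Gaussian elimination. I would not compute $D^{-1}$ in full; instead I would isolate $C_{22}$ directly, using only the two block equations in which it appears.

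First I would expand $CD=I$ into its four component identities,
\[
C_{11}D_{11}+C_{12}D_{21}=I,\quad C_{11}D_{12}+C_{12}D_{22}=0,\quad C_{21}D_{11}+C_{22}D_{21}=0,\quad C_{21}D_{12}+C_{22}D_{22}=I.
\]
Only the last two involve $C_{22}$. Using the invertibility of $D_{11}$, I would solve the third equation for the lower-left block, $C_{21}=-\,C_{22}D_{21}D_{11}^{-1}$, and substitute this into the fourth. Collecting terms then gives $C_{22}\bigl(D_{22}-D_{21}D_{11}^{-1}D_{12}\bigr)=I$, that is, $C_{22}S=I$ with $S$ the stated Schur complement. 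Note that this half of the argument needs only $D_{11}$ invertible; the hypothesis on $D_{22}$ is not required for the conclusion about $C_{22}$.

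The one genuinely nontrivial point — and the main obstacle — is the direction of the conclusion: the elimination yields the one-sided relation $C_{22}S=I$, whereas the lemma asserts that $S$ is itself invertible with $S^{-1}=C_{22}$. Because $C_{22}$ and $S$ are both finite-dimensional square matrices in $\mathbb{C}^{n_2\times n_2}$, a right inverse is automatically a two-sided inverse; hence $S$ is invertible and $C_{22}=S^{-1}$, which is exactly \eqref{eq:inverse}. Conceptually, this mirrors the block $LDU$ factorization
\[
D=\begin{pmatrix}I&0\\ D_{21}D_{11}^{-1}&I\end{pmatrix}\begin{pmatrix}D_{11}&0\\0&S\end{pmatrix}\begin{pmatrix}I&D_{11}^{-1}D_{12}\\0&I\end{pmatrix},
\]
in which the two unit-triangular outer factors are invertible; since $D$ is invertible, the diagonal factor — and therefore $S$ — must be invertible as well, and reading off the $(2,2)$ block of the inverse of this product reproduces the formula. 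I would present the direct elimination as the main argument and invoke the factorization only as the structural explanation of why $S$ cannot fail to be invertible.
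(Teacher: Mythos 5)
Your proof is correct and complete. Note that the paper itself supplies no proof of this lemma --- it is quoted directly from the cited reference \cite{LU2002119} --- so there is no internal argument to compare against; what you have written is the standard derivation of that reference's block-inverse formula. The two delicate points are both handled properly: you promote the one-sided identity $C_{22}\bigl(D_{22}-D_{21}D_{11}^{-1}D_{12}\bigr)=I$ to genuine invertibility of the Schur complement by using that a one-sided inverse of a square matrix over $\mathbb{C}^{n_2\times n_2}$ is two-sided, and your block $LDU$ factorization of $D$ is a correct independent confirmation (the two unit-triangular factors are invertible, so invertibility of $D$ forces invertibility of the diagonal factor, hence of $S$). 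Your side observation is also accurate: the invertibility of $D_{22}$ is never used in deriving \eqref{eq:inverse}, and a $2\times 2$ example such as $D_{11}=D_{12}=D_{21}=1$, $D_{22}=0$ confirms the formula holds without it; that hypothesis appears in the lemma as stated presumably because the source result simultaneously gives the companion formula $C_{11}=\bigl(D_{11}-D_{12}D_{22}^{-1}D_{21}\bigr)^{-1}$, which does require it. For the application in the paper (equation \eqref{eq:Theta}, with $D=[Q_1\;Q_2]^{\intercal}\Widehat{\cal A}^{\text{-}1}[Q_1\;Q_2]$ and $C$ the corresponding product with $\Widehat{\cal A}$), the hypotheses are verified anyway, so nothing is lost either way.
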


From direct calculations, the following lemma can be established.
\begin{lemma}
\label{lemma:eigenA}
Let ($\lambda_{j},{v}_{j})$ be an eigenpair of $\Widehat{\cal A}$, namely, $\lambda_j$ is a scalar, ${v}_j\in \mathbb{C}^M$, and $\Widehat{\cal A}{v}_{j}=\lambda_{j}{v}_{j}$. Then 
\begin{align}
&\lambda_{j} = 2(\kappa_{1}\!+\!\kappa_{2}) -2\kappa_{1}\cos(\xi_{j}') -2\kappa_{2}\cos(2\xi_{j}'), \\
& {v}_{j} = \frac{1}{\sqrt{M}}\left[\!\!\!\!
\begin{tabular}{c}
1 \\ $e^{i\xi_{j}'}$ \\ $e^{2i\xi'_{j}}$ \\$ \vdots $\\ $e^{i(M-1)\xi_{j}'}$
\end{tabular}\!\!\!\!\right], \quad \xi_{j}' = \frac{-\xi+2j\pi}{M}, \quad j=0,1,\!\cdots\!, M\!-\!1.
\end{align}
\end{lemma}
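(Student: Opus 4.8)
The plan is to verify the stated eigenpairs by direct substitution, using the observation (already noted in the text) that forming blocks of $M$ atoms and passing to the block Fourier variable $\xi$ is equivalent to viewing ${\cal A}$ as the infinite pentadiagonal lattice operator restricted to a single supercell of $M$ atoms, with the first- and second-neighbor bonds that cross a block boundary folded back through the corner entries of $A_1$ and $A_1^{\intercal}$ and weighted by the Bloch phases $e^{-i\xi}$ and $e^{i\xi}$. Accordingly I would insert the discrete plane wave $v=\frac{1}{\sqrt{M}}\,(1,\omega,\omega^2,\dots,\omega^{M-1})^{\intercal}$, $\omega=e^{i\theta}$, into $\Widehat{\cal A}(\xi)=A_0+e^{-i\xi}A_1+e^{i\xi}A_1^{\intercal}$ and compute the product one component at a time.

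I would first dispose of the interior components $2\le \ell\le M-3$, where only the band of $A_0$ contributes and the phase terms vanish. There the $\ell$th entry of $\Widehat{\cal A}(\xi)v$ is $-\kappa_2\omega^{\ell-2}-\kappa_1\omega^{\ell-1}+\kappa_0\omega^{\ell}-\kappa_1\omega^{\ell+1}-\kappa_2\omega^{\ell+2}$, which factors as $\big[\kappa_0-\kappa_1(\omega+\omega^{-1})-\kappa_2(\omega^2+\omega^{-2})\big]\omega^{\ell}$. With $\kappa_0=2(\kappa_1+\kappa_2)$ and $\omega=e^{i\theta}$ the bracket collapses to $2(\kappa_1+\kappa_2)-2\kappa_1\cos\theta-2\kappa_2\cos 2\theta$, i.e.\ the claimed $\lambda$ once $\theta=\xi_j'$. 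Thus on the interior the ansatz is an eigenvector with the stated eigenvalue for \emph{every} $\theta$, and $\xi$ has not yet entered.

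The crux is the four boundary rows $\ell\in\{0,1,M-2,M-1\}$, where the stencil reaches outside the block and the eigenvalue equation demands the ``missing'' entries $v_{-1},v_{-2}$ (at the left) and $v_{M},v_{M+1}$ (at the right). Here the three nonzero corner entries of $A_1$ — namely $-\kappa_1,-\kappa_2$ in its first row and $-\kappa_2$ in its second — together with $A_1^{\intercal}$ and the phases $e^{\mp i\xi}$ must reproduce exactly those terms. A short computation at $\ell=0$, for example, shows the extra contribution $e^{-i\xi}(-\kappa_1\omega^{M-1}-\kappa_2\omega^{M-2})$ equals the required $-\kappa_1\omega^{-1}-\kappa_2\omega^{-2}$ precisely when $\omega^{M}=e^{i\xi}$, and the checks at $\ell=1,M-2,M-1$ return the same single condition. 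Hence all four boundary rows are simultaneously consistent if and only if the quasi-periodicity (Bloch) condition $e^{iM\theta}=e^{i\xi}$ holds; solving it over one period (up to the sign convention fixed by the Fourier transform \eqref{eq:Fourierform}) yields exactly the $M$ wavenumbers $\theta=\xi_j'$ listed in the lemma for $j=0,\dots,M-1$.

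Finally I would argue completeness. The $M$ phases $\xi_j'$ are mutually distinct modulo $2\pi$ (they are spaced by $2\pi/M$), so the vectors $v_0,\dots,v_{M-1}$ are the standard discrete-Fourier vectors and are orthonormal — the factor $1/\sqrt{M}$ in the statement is exactly the normalization making $\|v_j\|=1$. Being $M$ orthonormal eigenvectors of the $M\times M$ Hermitian matrix $\Widehat{\cal A}(\xi)$, they form a complete eigenbasis, so $(\lambda_j,v_j)_{j=0}^{M-1}$ are \emph{all} the eigenpairs. I expect the only genuine bookkeeping to lie in the boundary rows: because the interactions extend to second neighbors there are \emph{two} boundary rows at each end rather than one, and one must track all three corner entries of $A_1$ (and their transposes) with the correct phase to confirm that every boundary row collapses to the same condition $\omega^{M}=e^{i\xi}$ — the step on which the precise form of $\xi_j'$ depends.
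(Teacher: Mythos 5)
Your verification is correct and is precisely the ``direct calculation'' the paper invokes without writing out: the interior rows yield the dispersion relation, the four boundary rows (two at each end, because of the second-neighbor bonds) all collapse to the single Bloch condition $\omega^{M}=e^{i\xi}$, and the $M$ resulting orthonormal DFT vectors give completeness for the Hermitian matrix $\Widehat{\cal A}(\xi)$. The sign wrinkle you hedge on is real but harmless: with the paper's stated transform $\Widehat{\cal A}(\xi)=A_{0}+e^{-i\xi}A_{1}+e^{i\xi}A_{1}^{\intercal}$ your condition gives $\theta=(\xi+2j\pi)/M$, whereas the lemma's $\xi_{j}'=(-\xi+2j\pi)/M$ matches the conjugate convention $A_{0}+e^{i\xi}A_{1}+e^{-i\xi}A_{1}^{\intercal}$ that the paper itself uses later in \eqref{eq:est2}; since the two phase sets differ only by $\xi\mapsto-\xi$ and cosine is even, the eigenvalues and all downstream results are unaffected.
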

From Lemma \ref{lemma:eigenA} it can be seen that the stability condition \eqref{eq:stability} guarantees that $\Widehat{\cal A}$ is positive semidefinite for any $\xi$ in $(0,2\pi]$.

\begin{theorem} If the coarse-grained variables are defined based on piecewise constant averaging operator, then the following estimate holds,
\begin{equation}
0 \le \Theta_{0,0}(0) \leq \frac{2\kappa_{1}+4\kappa_{2}}{M}.
\end{equation}
\label{thm:1}
\end{theorem}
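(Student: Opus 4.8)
The plan is to exploit the orthogonality of $[Q_1\ Q_2]$ to rewrite the $Q_2$-dependent integrand in \eqref{eq:constest} as a Schur complement; this simultaneously removes the dependence on the (non-unique) choice of $Q_2$ and produces a scalar that is manifestly bounded. Set $Q=[Q_1\ Q_2]$, an orthogonal matrix, and let $B=Q^{\intercal}\Widehat{\cal A}Q$, viewed as a $2$-by-$2$ block matrix with scalar $(1,1)$ block $B_{11}=Q_1^{\intercal}\Widehat{\cal A}Q_1$, blocks $B_{12}=Q_1^{\intercal}\Widehat{\cal A}Q_2$ and $B_{21}=Q_2^{\intercal}\Widehat{\cal A}Q_1=B_{12}^{*}$, and $(M\!-\!1)$-by-$(M\!-\!1)$ block $B_{22}=Q_2^{\intercal}\Widehat{\cal A}Q_2$. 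The integrand in \eqref{eq:constest} is then exactly $B_{12}B_{22}^{-1}B_{21}$.

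First I would apply Lemma \ref{lm1} with $C=B^{-1}$ and $D=B$, taking the Schur complement on the $B_{22}$ block, which gives $(B^{-1})_{11}=\big(B_{11}-B_{12}B_{22}^{-1}B_{21}\big)^{-1}$. Since $Q$ is real orthogonal, $B^{-1}=Q^{\intercal}\Widehat{\cal A}^{-1}Q$, so $(B^{-1})_{11}=Q_1^{\intercal}\Widehat{\cal A}^{-1}Q_1$. Rearranging yields the key identity
\[
B_{12}B_{22}^{-1}B_{21}=Q_1^{\intercal}\Widehat{\cal A}Q_1-\big(Q_1^{\intercal}\Widehat{\cal A}^{-1}Q_1\big)^{-1},
\]
which depends only on $Q_1$ and $\Widehat{\cal A}$, thereby resolving the non-uniqueness concern. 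By Lemma \ref{lemma:eigenA}, $\Widehat{\cal A}$ is positive definite for $\xi\in(0,2\pi)$, so both scalars on the right are positive; the lower bound $\Theta_{0,0}(0)\ge 0$ follows because the integrand equals $B_{12}B_{22}^{-1}B_{12}^{*}\ge 0$ (a positive-definite quadratic form), while discarding the subtracted term gives the pointwise upper bound $B_{12}B_{22}^{-1}B_{21}\le Q_1^{\intercal}\Widehat{\cal A}Q_1$.

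It then remains only to integrate the scalar $Q_1^{\intercal}\Widehat{\cal A}Q_1$. Writing $\mathbf 1=(1,\dots,1)^{\intercal}$ so that $Q_1=\mathbf 1/\sqrt M$, and using $\Widehat{\cal A}=A_0+e^{-i\xi}A_1+e^{i\xi}A_1^{\intercal}$, I would compute the contractions $\mathbf 1^{\intercal}A_0\mathbf 1$ and $\mathbf 1^{\intercal}A_1\mathbf 1$ directly from the displayed entries. Every interior row of $A_0$ sums to $\kappa_0-2\kappa_1-2\kappa_2=0$, so only the four boundary rows contribute, giving $\mathbf 1^{\intercal}A_0\mathbf 1=2\kappa_1+4\kappa_2$, while the three nonzero entries of $A_1$ give $\mathbf 1^{\intercal}A_1\mathbf 1=-(\kappa_1+2\kappa_2)$. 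Collecting terms,
\[
Q_1^{\intercal}\Widehat{\cal A}Q_1=\frac{1}{M}(2\kappa_1+4\kappa_2)(1-\cos\xi),
\]
and averaging over $(0,2\pi]$ with $\tfrac{1}{2\pi}\int_0^{2\pi}(1-\cos\xi)\,d\xi=1$ produces the stated bound $\Theta_{0,0}(0)\le(2\kappa_1+4\kappa_2)/M$.

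The only delicate point is the single value $\xi=2\pi$, where Lemma \ref{lemma:eigenA} shows $\Widehat{\cal A}$ has a zero eigenvalue whose eigenvector is precisely $Q_1$, so $\Widehat{\cal A}^{-1}$ fails to exist there. I expect this to be harmless: at that point $\Widehat{\cal A}Q_1=0$ forces $B_{12}=0$, so the integrand vanishes, and $B_{22}$ stays invertible since its range is orthogonal to the zero mode $Q_1$; hence both bounds extend by continuity and a single point contributes nothing to the integral. The main conceptual step — and the part worth stating carefully — is the Schur-complement rewriting, since it is what both eliminates the arbitrary $Q_2$ and reduces the entire estimate to the elementary pointwise inequality $B_{12}B_{22}^{-1}B_{21}\le Q_1^{\intercal}\Widehat{\cal A}Q_1$.
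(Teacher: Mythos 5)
Your proposal is correct and follows essentially the same route as the paper's proof: both invoke the block-inverse Lemma \ref{lm1} to rewrite the integrand as $Q_1^{\intercal}\Widehat{\cal A}Q_1-\bigl(Q_1^{\intercal}\Widehat{\cal A}^{-1}Q_1\bigr)^{-1}$ (which removes the arbitrary $Q_2$), then drop the nonnegative subtracted term and integrate $Q_1^{\intercal}\Widehat{\cal A}Q_1$ to obtain $(2\kappa_1+4\kappa_2)/M$. The only cosmetic differences are that you take the Schur complement in $B$ to read off $(B^{-1})_{11}$ directly rather than expanding $(Q_2^{\intercal}\Widehat{\cal A}Q_2)^{-1}$ in blocks of $\Widehat{\cal A}^{-1}$ as the paper does, that you compute the pointwise value $\frac{1}{M}(2\kappa_1+4\kappa_2)(1-\cos\xi)$ before averaging instead of integrating away the $e^{\pm i\xi}$ terms, and that you handle the singular point $\xi=2\pi$ more explicitly.
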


\begin{proof}
For $\xi \ne 2\pi$, $\Widehat{\cal A}$ is invertible and from the orthogonality of $Q_{1}$ and $Q_{2}$,
\begin{equation}
\left(\left[
\begin{array}{c}
Q_{1}^\intercal \\
Q_{2}^\intercal 
\end{array}
\right]\Widehat{\cal A} \; \big[Q_1\; Q_2\big]\right)^ {-1} \!\!=\;
 \left[
\begin{array}{c}
Q_{1}^\intercal \\
Q_{2}^\intercal 
\end{array}
\right]\Widehat{\cal A}^{\text{-}1} \big[Q_1\; Q_2\big].
\end{equation}
From Lemma \ref{lemma:eigenA}, $\Widehat{\cal A}$ has a zero eigenvalue only when $\xi=2\pi,$ and its corresponding eigenvector is equal to $\Widehat{\Phi}$ in the form of piecewise constant averaging. So $\Widehat{\Phi}^{*}\Widehat{\cal A}\Widehat{\Phi}$ is invertible for $\xi\in (0,2\pi)$. Since $\Widehat{\Phi}$ is perpendicular to $\Widehat{\Psi}$, $\Widehat{\Psi}^* \Widehat{\cal A} \Widehat{\Psi}$ is non-singular for $\xi \in (0,2\pi)$.

Applying Lemma \ref{lm1} to the above matrices, we arrive at the following equality,
\begin{equation}
\begin{array}{cc}
\Big(Q_{2}^{\intercal}\Widehat{\cal A}Q_{2}\Big)^{-1} = Q_2^{\intercal}\Widehat{\cal A}^{\text{-}1} Q_{2} -Q_{2}^{\intercal}\Widehat{\cal A}^{\text{-}1} Q_{1} \left(Q_{1}^{\intercal}\Widehat{\cal A}^{\text{-}1} Q_{1} \right)^{-1} Q_{1}^{\intercal}\Widehat{\cal A}^{\text{-}1} Q_{2}
\end{array}.
\label{eq:Theta}
\end{equation}

Substituting \eqref{eq:Theta} into \eqref{eq:constest}, and using the facts that $Q_{1}Q_{1}^{\intercal}+Q_{2}Q_{2}^{\intercal}=I$ and $\left(Q_{1}^{\intercal}\Widehat{\cal A}^{\text{-}1} Q_{1} \right)^{-1}$ is a scalar, one obtains,
\begin{equation}
\Theta_{0,0}(0) = \frac{1}{2\pi}\int_{0}^{2\pi}Q_{1}^{\intercal}\Widehat{\cal A}Q_{1} - (Q_{1}^{\intercal}\Widehat{\cal A}^{-1}Q_{1})^{-1} \text{ d}\xi.
\label{eq:est1}\end{equation}

In light of \eqref{eq:lakernel} and the eigen decomposition in Lemma \ref{lemma:eigenA}, we know that $\Theta_{0,0}(0)$, $Q_1^{\intercal}\Widehat{\cal A}Q_1$ and $(Q_{1}^{\intercal}\Widehat{\cal A}^{-1}Q_{1})^{-1}$ are real and non-negative. Thus, it is sufficient to use the integral of the first term to bound $\Theta_{0,0}(0)$,
\begin{equation}\begin{aligned}
\frac{1}{2\pi}\int_{0}^{2\pi}Q_{1}^{\intercal}\Widehat{\cal A}Q_{1}\text{ d}\xi &= \frac{1}{2\pi}\int_{0}^{2\pi}Q_{1}^{\intercal}\left( A_{0}+e^{i\xi}A_{1}+e^{-i\xi}A_{1}^{\intercal}\right)Q_{1}\text{ d}\xi \\
&= Q_{1}^{\intercal} A_{0} Q_{1} = \frac{2\kappa_{1}+4\kappa_{2}}{M}. \\
\end{aligned}\label{eq:est2}\end{equation}
So $\Theta_{0,0}(0) $ can be bounded by,
\begin{equation}
0 \leq \Theta_{0,0}(0) \leq \frac{1}{2\pi}\int_{0}^{2\pi}Q_{1}^{\intercal}\Widehat{\cal A}Q_{1}\text{ d}\xi = \frac{2\kappa_{1}+4\kappa_{2}}{M}.
\end{equation}
\end{proof}
\qed

\noindent{\bf Remark:} The second fluctuation-dissipation theorem \eqref{eq: 2FDT} indicates that $\Theta_{0,0}(0)$ is proportional to the variance of the random noise. This estimate shows that the magnitude of the noise is inverse proportional to the level of coarse-graining. 

Due to the second fluctuation-dissipation theorem \eqref{eq: 2FDT} and Cauchy-Schwarz inequality, we have
\begin{equation}\begin{aligned}
\Theta_{I,J}^{2}(0) &= \left(k_{B}T\right)^{-2} \big\langle R_{I}(0)R_{J}(0)\big\rangle ^{2} \\
&\le \left(k_{B}T\right)^{-2} \big\langle R_{I}(0)R_{I}(0)\big\rangle \big\langle R_{J}(0)R_{J}(0)\big\rangle \\
&=\Theta_{I,I}(0)\Theta_{J,J}(0). \\
\end{aligned}\end{equation}
Since $\Theta$ is a Teoplitz form, we have $\Theta_{I,I}=\Theta_{J,J}=\Theta_{0,0}$. This implies off-diagonal entries can also be bounded by the diagonal entry and our estimate,
\begin{equation}
|\Theta_{I,J}(0)| \le |\Theta_{0,0}(0)| \le {\cal O}(M^{-1}).
\end{equation} 

\

Here we present a numerical experiment for the 1-d lattice model in Figure \ref{fig:blocksize-const} to verify this estimate. The force constants are obtained from a Morse potential, with $\kappa_{1} = 12.2676$ and $\kappa_{2} = 3.0628$. We generate a large enough system ($N=2^{13}$) with periodic boundary conditions to imitate the 1-d infinite chain. When the number of atoms is large enough, the boundary conditions will have little effect on interior atoms within a relatively short time period. So it is reasonable to use this finitely large system to approximate the exact model. From Figure \ref{fig:blocksize-const}, we observe that the estimate is quite sharp.

\begin{figure}[ht]
\begin{center}
 \includegraphics[width=0.5\textwidth,height=0.4\textwidth]{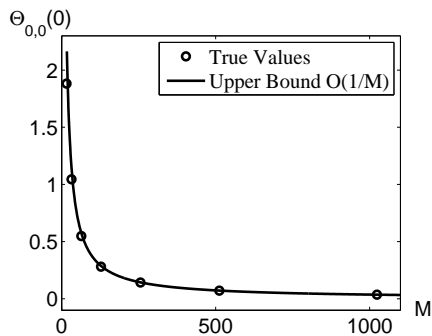}
 \caption{This figure shows the dependence of the kernel function on the block size for the piecewise constant averaging operator. We fix the total number of the atoms $N=2^{13}$ in the simulation and vary the block size $M=2^{4},2^{5},\cdots,2^{10}$, which represents the scale of averaging. We assemble $M$ atoms as a block. The diagonal entry of the kernel function $\Theta_{0,0}(0)$ is observed to decay at the rate of ${\cal O}(M^{-1})$ as $M \to +\infty$.}
\label{fig:blocksize-const}   
\end{center}
\end{figure}

\subsubsection{Piecewise linear weighting functions}
If the coarse-grained variables are defined by the piecewise linear weighting functions, these functions centered in adjacent blocks have overlaps as Fig. \ref{fig: linear} shows, which implies that the operator $\Phi$ is not diagonal and its Fourier transform is no longer a real-valued matrix. It becomes more difficult to simplify \eqref{eq:lakernel}. 

To deal with this case, we first define two vectors ${h}_1$, ${h}_2 \in \mathbb{R}^M$ as follows,
\begin{equation}
{h}_1=\frac{1}{c}\left[ \begin{array}{c} 1 \\ 2 \\ \vdots \\ M \\
\end{array}
\right] \quad \text{ and }\quad
{h}_2=\frac{1}{c}\left[ \begin{array}{c}M\!\!-\!\!1\\ \vdots \\ 1\\ 0
\end{array}
\right],
\end{equation}
where $c=\sqrt{\frac{1}{3}(2M^3+M)}$ is a constant to normalize $\Phi$. $h_{1}$ and $h_{2}$ represent the two sides of the hat in Figure \ref{fig: linear}.

Notice that the operator $\Phi$ is lower block bidiagonal, with the main diagonal block equal to ${h}_1$ and the first off diagonal block is equal to ${h}_2$, i.e.,
\begin{equation}
\Phi= \left[ \begin{array}{ccccc}
\ddots   	  &      	  	& 		&		& 			\\
 \ddots         & 	{h}_1 	&		& 	 	 &		        \\
	           & {h}_2		&{h}_1 	&  		& 			\\
  		  & 			& {h}_2     & {h}_1 	&			 \\
		  & 			& 		&{h}_2	&  \ddots		 \\
		  & 			&		&		& \ddots		 \\
\end{array}
\right].
\end{equation}

We seek $\Psi$ in a lower block bidiagonal form and denote its main diagonal block as $H_1$ and the first off-diagonal block as $H_2$, and both $H_1$ and $H_2$ are in $\mathbb{R}^{M\times(M-1)}$. To make $[\Phi \; \Psi]$ a set of basis of $X=\ell^2(\mathbb{L})$, $H_1$ and $H_2$ should satisfy,
\begin{equation}\left\{
\begin{aligned}
&{h}_1^{\intercal}H_1 + {h}_{2}^{\intercal}H_{2} = 0, \\
&{h}_1^{\intercal}H_2 = 0, \\
&{h}_2^{\intercal}H_1 = 0.
\end{aligned}\right.
\label{eq:choosepsi}
\end{equation}

The above equations hold as long as the column vectors of the matrix $H=\left[
\begin{array}{c}
H_1 \\ H_2 \\
\end{array} \right]$ are the solution set of the following linear equations
\begin{equation}
\left[ \begin{array}{cc}
{h}_1^{\intercal} & {h}_2^{\intercal} \\
{h}_2^\intercal & 0 \\
0 & {h}_1^\intercal \\
\end{array}\right] {y} = 0.
\label{eq:heq}
\end{equation}
whose solution space has dimension $2M-3$.

Meanwhile, we need to ensure that the matrix $H$ has full column rank, i.e., $\text{rank}(H)=M-1$, to guarantee that $\Psi$ spans the whole complementary space of $\Phi$. This can be guaranteed when $M\geq2$.
For example, we let $R\in \mathbb{C}^{M-1}$ be the Cholesky factor of $\Widehat{\Psi}^*\Widehat{\Psi}$, i.e., $R^*R=\Widehat{\Psi}^*\Widehat{\Psi}$. Then we set ${\Phi}_0=\Widehat{\Phi}(\Widehat{\Phi}^*\Widehat{\Phi})^{\text{-}\frac{1}{2}}$ and $\Psi_0=\Widehat{\Psi}R^{-1}$. With this choice, it is straightforward to verify that $[\Phi_0 \; \Psi_0]$ is an orthogonal matrix.

Recall the expression in \eqref{eq:lakernel}, which for the piecewise linear averaging, is reduced to the following expression,
\begin{equation}\begin{aligned}
\Theta_{0,0}(0) &= \frac{1}{2\pi}\int_{0}^{2\pi}\left(\Widehat{\Phi}^{*}\Widehat{\Phi}\right){\Phi}_0^*\Widehat{\cal A}{\Psi}_0{\Big({\Psi}_0^{*}\Widehat{\cal A}{\Psi}_0\Big)}^{-1}{\Psi}_0^{*}\Widehat{\cal A}{\Phi}_0\text{~d}\xi.
\end{aligned}\label{eq:linearla}
\end{equation}

\begin{theorem}If the coarse-grained variables are defined with piecewise linear weighting functions, then the following estimate holds,
\begin{equation}
\Theta_{0,0}(0) \leq \frac{2M\kappa_{1}+8M\kappa_{2}-6\kappa_{2}}{M(2M^{2}+1)/3}.
\end{equation}
\label{thm: 2}
\end{theorem}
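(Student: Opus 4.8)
The plan is to mirror the argument used for Theorem \ref{thm:1}, now with the orthonormalized pair $\Phi_0,\Psi_0$ playing the role of $Q_1,Q_2$, and then to reduce the whole problem to the evaluation of a single Fourier integral.

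First I would note that, by Lemma \ref{lemma:eigenA}, the symbol $\Widehat{\cal A}(\xi)$ is invertible for every $\xi\in(0,2\pi)$, its only null direction (occurring at $\xi=2\pi$) being the constant vector, which lies in the range of $\Widehat{\Phi}$ rather than of $\Widehat{\Psi}$; hence $\Psi_0^{*}\Widehat{\cal A}\Psi_0$ is nonsingular for almost every $\xi$ and the integrand in \eqref{eq:linearla} is well defined. Since $[\Phi_0\;\Psi_0]$ is orthogonal, the relation $\big([\Phi_0\;\Psi_0]^{*}\Widehat{\cal A}[\Phi_0\;\Psi_0]\big)^{-1}=[\Phi_0\;\Psi_0]^{*}\Widehat{\cal A}^{-1}[\Phi_0\;\Psi_0]$ holds, and applying Lemma \ref{lm1} exactly as in the proof of Theorem \ref{thm:1} gives the Schur-complement identity
\begin{equation}
\Phi_0^{*}\Widehat{\cal A}\Psi_0\big(\Psi_0^{*}\Widehat{\cal A}\Psi_0\big)^{-1}\Psi_0^{*}\Widehat{\cal A}\Phi_0=\Phi_0^{*}\Widehat{\cal A}\Phi_0-\big(\Phi_0^{*}\Widehat{\cal A}^{-1}\Phi_0\big)^{-1}.
\end{equation}
The key simplification over Theorem \ref{thm:1} is that $\Phi_0$ carries a single column per block, so $\Widehat{\Phi}^{*}\Widehat{\Phi}$ and $\Phi_0^{*}\Widehat{\cal A}^{-1}\Phi_0$ are scalars. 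Substituting $\Phi_0=\Widehat{\Phi}(\Widehat{\Phi}^{*}\Widehat{\Phi})^{-1/2}$ into \eqref{eq:linearla} and using the identity above, these scalar factors cancel cleanly and yield
\begin{equation}
\Theta_{0,0}(0)=\frac{1}{2\pi}\int_{0}^{2\pi}\left[\Widehat{\Phi}^{*}\Widehat{\cal A}\Widehat{\Phi}-\frac{(\Widehat{\Phi}^{*}\Widehat{\Phi})^{2}}{\Widehat{\Phi}^{*}\Widehat{\cal A}^{-1}\Widehat{\Phi}}\right]d\xi .
\end{equation}
Because $\Widehat{\cal A}^{-1}$ is positive definite for $\xi\neq 2\pi$, the subtracted term is nonnegative (in fact it is dominated by the first term, by Cauchy--Schwarz in the $\Widehat{\cal A}$-inner product), so discarding it produces the upper bound $\Theta_{0,0}(0)\le \frac{1}{2\pi}\int_{0}^{2\pi}\Widehat{\Phi}^{*}\Widehat{\cal A}\Widehat{\Phi}\,d\xi$.

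It then remains to evaluate this integral. I would use $\Widehat{\Phi}(\xi)=h_1+e^{-i\xi}h_2$ together with $\Widehat{\cal A}(\xi)=A_0+e^{-i\xi}A_1+e^{i\xi}A_1^{\intercal}$; integrating over $\xi$ annihilates all nonconstant Fourier modes and leaves $h_1^{\intercal}A_0h_1+h_2^{\intercal}A_0h_2+2h_2^{\intercal}A_1h_1$. The cleanest route is to recognize this quantity as $(\Phi^{\intercal}{\cal A}\Phi)_{0,0}=\phi_0^{\intercal}{\cal A}\phi_0$, where $\phi_0$ is the compactly supported hat function, i.e. the block column of $\Phi$ with entries $1,2,\dots,M,M-1,\dots,1,0$ divided by $c$. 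Summation by parts on the infinite lattice gives $\phi_0^{\intercal}{\cal A}\phi_0=\kappa_1\sum_j(\phi_{0,j+1}-\phi_{0,j})^2+\kappa_2\sum_j(\phi_{0,j+2}-\phi_{0,j})^2$, and the triangular profile of $\phi_0$ makes both sums elementary: the first-difference squares contribute $2M/c^{2}$ and the second-difference squares contribute $(8M-6)/c^{2}$. With $c^{2}=M(2M^{2}+1)/3$ this is precisely the claimed bound.

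The main obstacle is bookkeeping rather than anything conceptual: the reduction to $\Widehat{\Phi}^{*}\Widehat{\cal A}\Widehat{\Phi}$ is a routine adaptation of Theorem \ref{thm:1} once the scalar nature of $\Widehat{\Phi}^{*}\Widehat{\Phi}$ is exploited, but the final counting step must be handled with care. Working through the truncated blocks $A_0$ and $A_1$ term by term is error-prone, whereas reinterpreting the symbol integral as the energy $\phi_0^{\intercal}{\cal A}\phi_0$ of the full hat function eliminates all boundary corrections at once. In particular, the edge terms of the second-difference sum (the two unit contributions at the ends of the support) are exactly what generate the $-6\kappa_2$ term in the bound, so they must not be overlooked.
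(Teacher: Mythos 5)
Your proposal is correct and follows essentially the same route as the paper: the Schur-complement identity from Lemma \ref{lm1} reduces \eqref{eq:linearla} to $\frac{1}{2\pi}\int_{0}^{2\pi}\bigl[\Widehat{\Phi}^{*}\Widehat{\cal A}\Widehat{\Phi}-(\Widehat{\Phi}^{*}\Widehat{\cal A}^{-1}\Widehat{\Phi})^{-1}(\Widehat{\Phi}^{*}\Widehat{\Phi})^{2}\bigr]d\xi$, the nonnegative subtracted term is dropped, and the remaining integral is evaluated, exactly as in \eqref{eq:eb}--\eqref{eq:la}. Your only deviation is cosmetic but pleasant: instead of multiplying out the blocks $A_0,A_1$ as the paper does, you recognize the integral as the lattice energy $\phi_0^{\intercal}{\cal A}\phi_0$ of the hat function and compute it by summation by parts, which correctly reproduces the $2M\kappa_1$ and $(8M-6)\kappa_2$ contributions, including the edge terms responsible for $-6\kappa_2$.
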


\begin{proof}
Applying Lemma \ref{lm1} to ${\Big({\Psi}_0^{*}\Widehat{\cal A}{\Psi}_0\Big)}^{{-}1}$ and following similar steps in the proof of Theorem \ref{thm:1}, we have
\begin{equation}\begin{aligned}
\Theta_{0,0}(0) &= \frac{1}{2\pi}\int_{0}^{2\pi} \Widehat{\Phi}^*\Widehat{\cal A}{\Widehat{\Phi}}-\Big(\Widehat{\Phi}^{*}\Widehat{\cal A}^{-1}{\Widehat{\Phi}}\Big)^{-1}\left( \Widehat{\Phi}^*\Widehat{\Phi}\right)^2
 \text{~d}\xi.
\end{aligned}\label{eq:eb}\end{equation}

Again, with the positive semidefinite property, it suffices to estimate the integral of the first term. Direct calculations yield, 
\begin{equation}\begin{aligned}
\frac{1}{2\pi}\int_{0}^{2\pi} \Widehat{\Phi}^{*}\Widehat{\cal A}\Widehat{\Phi}\text{~d}\xi 
& = \frac{1}{2\pi}\int_{0}^{2\pi}
\left( {h}_{1}^{\intercal}+{h}_{2}^{\intercal}e^{-i\xi}\right)\left( A_{0}+A_{1}e^{i\xi}+A_{1}^{\intercal}e^{-i\xi}\right)\left( {h}_{1}+{h}_{2}e^{i\xi}\right)
 \text{~d}\xi \\
&=\frac{1}{2\pi}\int_{0}^{2\pi}{h}_{1}^{\intercal}A_{0}{h}_{1}+2{h}_{1}^{\intercal}A_{1}^{\intercal}{h}_{2}+{h}_{2}^{\intercal}A_{0}{h}_{2}\text{~d}\xi \\
&=\frac{2M\kappa_{1}+8M\kappa_{2}-6\kappa_{2}}{M(2M^{2}+1)/3}.
\end{aligned}\label{eq:la}\end{equation}
Then we have the estimate,
\begin{equation}
 \Theta_{0,0}(0) \leq \frac{1}{2\pi} \int_{0}^{2\pi} \Widehat{\Phi}^*\Widehat{\cal A}{\Widehat{\Phi}} \text{~d}\xi = \frac{2M\kappa_{1}+8M\kappa_{2}-6\kappa_{2}}{M(2M^{2}+1)/3}.
\end{equation}
\end{proof}
\qed

\noindent{\bf Remark:} Interestingly, with this coarse-graining scheme, $ \Theta_{0,0}(0) $, the variance of the random noise, decays more rapidly with rate $\mathcal{O}(M^{-2})$ compared to constant averaging. 

With the same setup as in the previous numerical test, we tested the dependence of $\Theta_{0,0}(0)$ on $M$ for piecewise linear averaging numerically. The results, as shown in Figure \ref{fig:blocksize-linear}, again agree with the estimate.
\begin{figure}[htp]
\begin{center}
\includegraphics[width=0.45\textwidth,height=0.35\textwidth]{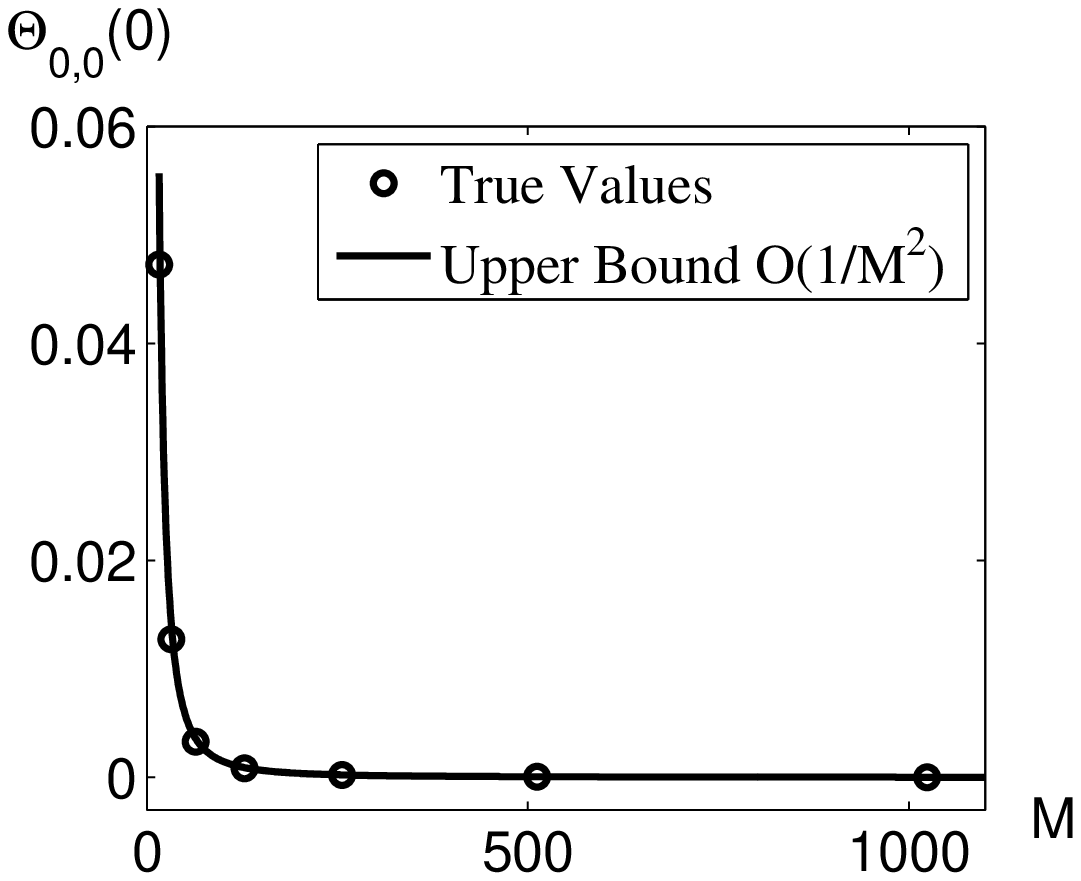}
\includegraphics[width=0.45\textwidth,height=0.35\textwidth]{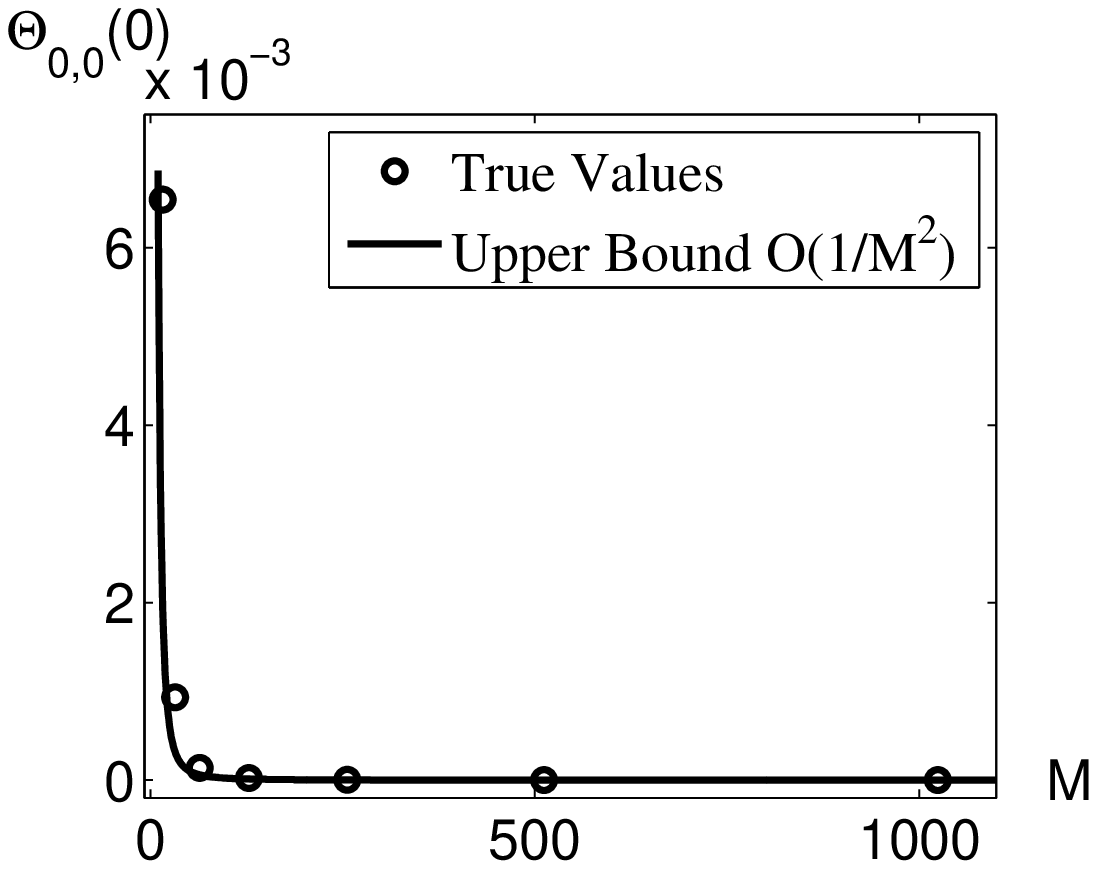}
\caption{The figures show the dependence of the kernel function on the block size for piecewise linear averaging operator with different force constants. We fix the total number of the atoms $N=2^{13}$ in the simulation and vary the block size $M=2^{4},2^{5},\cdots,2^{10}$, which represents the averaging scale. For the left figure, the force constants are obtained from a Morse potential, with $\kappa_{1} = 12.2676$ and $\kappa_{2} = 3.0628$. For the right figure, $\kappa_{1} = 12.2676$ and $\kappa_{2} = -3$, which is close to the boundary of stability conditions \eqref{eq:stability}. $\Theta_{0,0}(0)$ in both cases show decay at rate of $\cal{O}(M^{-2})$ as anticipate.}
\label{fig:blocksize-linear}   
\end{center}
\end{figure}

\subsection{The spatial decay of $\Theta_{0,J}(0)$ }
Now we turn to the {\it off-diagonal} entries of the $\Theta(0)$. Only the piecewise constant averaging case is considered here. 

Recall from \eqref{eq:lakernel} that we have
\begin{equation} 
\Theta_{0,J}(0) = \frac{1}{2\pi}\int_{0}^{2\pi} e^{iJ\xi}Q_{1}^{\intercal}\Widehat{\cal A}Q_{2}\left( Q_{2}^{\intercal}\Widehat{\cal A}Q_{2}\right)^{-1} Q_{2}^{\intercal}\Widehat{\cal A}Q_{1} \text{~d}\xi.
\label{eq:Theta0J}
\end{equation}

In light of the second fluctuation-dissipation theorem \eqref{eq: 2FDT}, this entry indicates the spatial correlation of the random noise. For this analysis, we will fix the block size $M$, and focus on the behavior of the kernel function as $|J| \gg 1.$

\begin{lemma}
Let $F(\xi) = \left(Q_{1}^{\intercal}\Widehat{\cal A}^{-1}Q_{1}\right)^{-1}$ on $(0,2\pi]$, then $F(\xi)$ is a real-valued even function and it is $n$ times continuously differentiable for any integer $n$, i.e., $F \in C^{\infty}\left( 0,2\pi \right]$.
\end{lemma}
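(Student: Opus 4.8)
The plan is to reduce $F$ to a ratio of two manifestly smooth functions and then control the denominator. First I would record the structural facts we need. Since $A_0$ is real symmetric and $A_1$ is real, the symbol $\Widehat{\cal A}(\xi) = A_0 + e^{-i\xi}A_1 + e^{i\xi}A_1^\intercal$ is Hermitian for every $\xi$, and by Lemma \ref{lemma:eigenA} together with the stability conditions it is positive definite on $(0,2\pi)$ and has a single simple zero eigenvalue at $\xi = 2\pi$, whose eigenvector is precisely $Q_1$ (the vector $v_j$ with $\xi_j' = 0$). The reality of $F$ on $(0,2\pi)$ is then immediate: for a Hermitian matrix and a real vector, the scalar $Q_1^\intercal \Widehat{\cal A}^{-1} Q_1$ equals its own complex conjugate, so it is real (and strictly positive by positive definiteness), and hence so is its reciprocal $F$.

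For the evenness I would use the conjugation symmetry of the symbol. Because $A_0, A_1$ are real and $e^{\pm i(2\pi - \xi)} = e^{\mp i\xi}$, one has $\Widehat{\cal A}(2\pi - \xi) = \overline{\Widehat{\cal A}(\xi)}$ entrywise, and therefore $\Widehat{\cal A}(2\pi-\xi)^{-1} = \overline{\Widehat{\cal A}(\xi)^{-1}}$ wherever the inverse exists. Since $Q_1$ is real, $Q_1^\intercal \Widehat{\cal A}(2\pi - \xi)^{-1}Q_1 = \overline{Q_1^\intercal \Widehat{\cal A}(\xi)^{-1}Q_1}$, which equals $Q_1^\intercal \Widehat{\cal A}(\xi)^{-1}Q_1$ by the reality established above. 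Taking reciprocals gives $F(2\pi - \xi) = F(\xi)$, i.e. $F$ is symmetric about $\xi = \pi$; this is exactly evenness once $(0,2\pi]$ is identified with the Brillouin zone $(-\pi,\pi]$.

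The smoothness is the crux, and the only genuine difficulty is the point $\xi = 2\pi$, where $\Widehat{\cal A}$ is singular so the defining formula for $F$ is not literally meaningful. To get around this I would replace $\Widehat{\cal A}^{-1}$ by the adjugate via Cramer's rule, writing $F(\xi) = \det\Widehat{\cal A}(\xi) \big/ \big(Q_1^\intercal \operatorname{adj}(\Widehat{\cal A}(\xi))\, Q_1\big)$. The entries of $\Widehat{\cal A}(\xi)$ are trigonometric polynomials in $\xi$, and both the determinant and the adjugate are polynomial in those entries, so numerator and denominator are trigonometric polynomials and hence $C^\infty$ (indeed real-analytic) on all of $\mathbb{R}$. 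It then remains to show the denominator $g(\xi) := Q_1^\intercal \operatorname{adj}(\Widehat{\cal A}(\xi))Q_1$ does not vanish on $(0,2\pi]$. On $(0,2\pi)$ this is clear from $\operatorname{adj}(\Widehat{\cal A}) = \det(\Widehat{\cal A})\,\Widehat{\cal A}^{-1}$, which gives $g = \det(\Widehat{\cal A})\cdot Q_1^\intercal \Widehat{\cal A}^{-1}Q_1 > 0$ since both factors are positive there.

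At $\xi = 2\pi$ I would use the spectral form of the adjugate: writing $\Widehat{\cal A} = \sum_{j} \lambda_j v_j v_j^*$, one has $\operatorname{adj}(\Widehat{\cal A}) = \sum_{j} \big(\prod_{k\neq j}\lambda_k\big) v_j v_j^*$, so when exactly one eigenvalue $\lambda_{j_0}$ vanishes every term with $j\neq j_0$ carries the factor $\lambda_{j_0}=0$ and drops out, leaving $\operatorname{adj}(\Widehat{\cal A}) = \big(\prod_{k\neq j_0}\lambda_k\big)\, v_{j_0} v_{j_0}^*$. By Lemma \ref{lemma:eigenA} the null eigenvector $v_{j_0}$ is exactly $Q_1$, so with $Q_1^\intercal Q_1 = 1$ we obtain $g(2\pi) = \prod_{k\neq j_0}\lambda_k > 0$, the product being over the strictly positive eigenvalues. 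Hence $g > 0$ throughout $(0,2\pi]$, and $F$ is a quotient of two $C^\infty$ functions with nonvanishing denominator, so $F \in C^\infty(0,2\pi]$ (with the by-product $F(2\pi)=0$). The main obstacle is precisely this removable singularity at $\xi=2\pi$; the adjugate representation is what makes it removable, and the eigen-computation of Lemma \ref{lemma:eigenA} is what certifies the denominator survives there.
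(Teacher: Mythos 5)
Your proof is correct, and its skeleton coincides with the paper's: both pass through Cramer's rule, writing $F(\xi)=\det\Widehat{\cal A}\,\bigl(Q_1^\intercal\,\mathrm{adj}(\Widehat{\cal A})\,Q_1\bigr)^{-1}$ so that the apparent singularity where $\Widehat{\cal A}$ degenerates (at $\xi=2\pi$, which the paper's proof loosely calls $\xi=0$) becomes removable, and both reduce smoothness to the nonvanishing of the trigonometric-polynomial denominator. Where you genuinely diverge is in how that nonvanishing is certified. The paper does it by an explicit computation, asserting $Q_1^\intercal\,\mathrm{adj}(\Widehat{\cal A})\,Q_1 = C_0 - C_1\cos\xi - C_2\cos 2\xi$ with $C_0>|C_1|+|C_2|$, whence positivity everywhere; the computation itself is left as ``direct.'' You instead argue spectrally: positivity on $(0,2\pi)$ from $\mathrm{adj}(\Widehat{\cal A})=\det(\Widehat{\cal A})\,\Widehat{\cal A}^{-1}$ with both factors positive, and at the degenerate point the rank-one collapse $\mathrm{adj}(\Widehat{\cal A})=\bigl(\prod_{k\neq j_0}\lambda_k\bigr)v_{j_0}v_{j_0}^{*}$ for a simple zero eigenvalue, combined with the fact from Lemma~\ref{lemma:eigenA} that the null vector is exactly $Q_1$, giving $g(2\pi)=\prod_{k\neq j_0}\lambda_k>0$. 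The simplicity you invoke does hold: by Lemma~\ref{lemma:eigenA} and the stability condition, $\lambda_j=2(1-\cos\xi_j')\bigl[\kappa_1+2\kappa_2(1+\cos\xi_j')\bigr]$ with the bracket strictly positive, so $\lambda_j=0$ forces $\xi_j'\equiv 0 \pmod{2\pi}$, which occurs for exactly one index when $\xi=2\pi$. Your route buys robustness and generality --- it uses only that the symbol has a simple zero eigenvalue with null vector $Q_1$, so it would survive longer-range interactions without redoing any entrywise computation, and it yields $F(2\pi)=0$ as a by-product --- whereas the paper's route buys an explicit closed form for the denominator. You also supply the reality and evenness arguments (Hermitian-ness of $\Widehat{\cal A}$, and the conjugation symmetry $\Widehat{\cal A}(2\pi-\xi)=\overline{\Widehat{\cal A}(\xi)}$ together with reality of $Q_1$) that the paper merely asserts as ``clear.'' No gaps.
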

\begin{proof}
Since $\Widehat{\cal A}$ is Hermitian, it is clear that $F$ is real-valued and even. For its differentiability, let's first consider $\xi \neq 0$ when $\Widehat{\cal A}$ is invertible. From Cramer's rule, one has
\begin{equation}\begin{aligned}
F(\xi) = \text{det}(\Widehat{\cal A}) \left( Q_{1}^{\intercal} \text{adj}(\Widehat{\cal A})Q_{1}\right)^{-1},
\end{aligned}\end{equation}
where $\text{det}(\Widehat{\cal A})$ is the determinant of $\Widehat{\cal A}$ and $\text{adj}(\Widehat{\cal A})$ is the adjugate matrix of $\Widehat{\cal A}$. This expression also shows that $\xi=0$ is not a singularity to $F(\xi)$. From Lemma \ref{lemma:eigenA}, we know that $\text{det}(\Widehat{\cal A})$ is smooth w.r.t $\xi$, and next we will prove $\left( Q_{1}^{\intercal} \text{adj}(\Widehat{\cal A})Q_{1}\right)^{-1}$ is smooth as well. 

From direct computations, we observe that
\begin{equation}
\left( Q_{1}^{\intercal} \text{adj}(\Widehat{\cal A})Q_{1}\right)^{-1} = \Big( C_{0}-C_{1}\cos\xi-C_{2}\cos 2\xi \Big)^{-1},
\label{eq: adjA}
\end{equation}
where $C_{0},C_{1}$ and $C_{2}$ are constants related to $\kappa_{1}$ and $\kappa_{2}$, and $C_{0}>|C_{1}|+|C_{2}|$. This implies that $\left( Q_{1}^{\intercal} \text{adj}(\Widehat{\cal A})Q_{1}\right)$ is positive and both $\left( Q_{1}^{\intercal} \text{adj}(\Widehat{\cal A})Q_{1}\right)^{-1}$ and $F(\xi)$ belong to $C^{\infty}\left( (0,2\pi] \right)$.
\end{proof}
\qed

\begin{theorem} When the coarse-grained variables are defined by piecewise constant weighting functions, then
\begin{equation}
|\Theta_{0,J}(0)| \leq {o}\left( |J|^{-n} \right)
\end{equation}
as $|J|\rightarrow \infty$ for any positive integer $n$.
\end{theorem}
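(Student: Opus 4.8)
The plan is to recognize $\Theta_{0,J}(0)$ as a Fourier coefficient of a smooth periodic scalar function and then invoke the classical fact that the Fourier coefficients of a $C^{\infty}$ function decay faster than any power of $|J|$. First I would reuse the algebraic simplification already carried out in the proof of Theorem \ref{thm:1}: applying Lemma \ref{lm1} to the Schur complement exactly as in the derivation of \eqref{eq:est1}, the matrix product in the integrand of \eqref{eq:Theta0J} collapses to the scalar
\begin{equation}
g(\xi) := Q_{1}^{\intercal}\Widehat{\cal A}Q_{1} - \left(Q_{1}^{\intercal}\Widehat{\cal A}^{-1}Q_{1}\right)^{-1},
\end{equation}
so that $\Theta_{0,J}(0) = \frac{1}{2\pi}\int_{0}^{2\pi} e^{iJ\xi} g(\xi)\,d\xi$ is precisely the $J$-th Fourier coefficient of $g$ on the circle $(0,2\pi]$. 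Note that $\Theta_{0,0}(0)=\frac{1}{2\pi}\int_0^{2\pi} g\,d\xi$ is the $J=0$ case already analyzed.

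Next I would verify that $g$ extends to a $C^{\infty}$, $2\pi$-periodic function on the whole circle. The first term $Q_{1}^{\intercal}\Widehat{\cal A}Q_{1}=Q_{1}^{\intercal}\bigl(A_{0}+e^{-i\xi}A_{1}+e^{i\xi}A_{1}^{\intercal}\bigr)Q_{1}$ is a trigonometric polynomial, hence smooth and periodic. For the second term $F(\xi)=\left(Q_{1}^{\intercal}\Widehat{\cal A}^{-1}Q_{1}\right)^{-1}$ I would invoke the preceding lemma, which expresses $F$ as the product of $\det(\Widehat{\cal A})$ (a trigonometric polynomial in $\xi$) with the factor $\left(C_{0}-C_{1}\cos\xi-C_{2}\cos 2\xi\right)^{-1}$ from \eqref{eq: adjA}; since $C_{0}>|C_{1}|+|C_{2}|$ the denominator never vanishes, so this factor is real-analytic and $2\pi$-periodic. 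Thus both pieces of $g$, and therefore $g$ itself, are smooth and periodic on $(0,2\pi]$.

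With smoothness in hand, the decay follows by integrating by parts $n$ times: each integration contributes a factor $\tfrac{-1}{iJ}$ and periodicity annihilates the boundary terms, giving
\begin{equation}
\Theta_{0,J}(0) = \left(\frac{-1}{iJ}\right)^{n}\frac{1}{2\pi}\int_{0}^{2\pi} e^{iJ\xi}\, g^{(n)}(\xi)\,d\xi,
\end{equation}
whence $|\Theta_{0,J}(0)|\le C_{n}|J|^{-n}$ with $C_{n}=\frac{1}{2\pi}\int_{0}^{2\pi}|g^{(n)}(\xi)|\,d\xi<\infty$, finite because $g^{(n)}$ is continuous on a compact set. To sharpen this $\mathcal{O}(|J|^{-n})$ bound into the claimed $o(|J|^{-n})$, I would simply apply the same estimate with $n+1$ in place of $n$, obtaining $|\Theta_{0,J}(0)|\le C_{n+1}|J|^{-(n+1)}=(C_{n+1}/|J|)\,|J|^{-n}$, and the prefactor $C_{n+1}/|J|\to 0$ as $|J|\to\infty$.

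The only genuinely delicate point — and the main obstacle — is the smoothness of $g$ across the gluing point $\xi=2\pi\equiv 0$, where $\Widehat{\cal A}$ is singular: by Lemma \ref{lemma:eigenA} it has a zero eigenvalue there with eigenvector $Q_{1}$, so $\Widehat{\cal A}^{-1}$ blows up. What rescues the argument is that the quantity actually entering $g$ is the reciprocal $\left(Q_{1}^{\intercal}\Widehat{\cal A}^{-1}Q_{1}\right)^{-1}$, which instead tends to $0$ and remains smooth. This is exactly the regularity secured by the preceding lemma, and once it is granted the remainder of the proof is routine Fourier analysis.
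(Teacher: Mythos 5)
Your proposal matches the paper's proof essentially step for step: the same Schur-complement reduction to $\Theta_{0,J}(0)=\frac{1}{2\pi}\int_{0}^{2\pi}e^{iJ\xi}\bigl[Q_{1}^{\intercal}\Widehat{\cal A}Q_{1}-(Q_{1}^{\intercal}\Widehat{\cal A}^{-1}Q_{1})^{-1}\bigr]\,d\xi$, the same appeal to the lemma giving smoothness of $F(\xi)=(Q_{1}^{\intercal}\Widehat{\cal A}^{-1}Q_{1})^{-1}$ (including regularity across the gluing point $\xi=0\equiv 2\pi$, which the lemma settles via the nonvanishing denominator $C_{0}-C_{1}\cos\xi-C_{2}\cos 2\xi$), and the same repeated integration by parts with periodicity killing the boundary terms. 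If anything, you are slightly more careful than the paper in converting the $\mathcal{O}(|J|^{-(n+1)})$ bound into the claimed $o(|J|^{-n})$, a passage the paper's proof leaves implicit.
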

\begin{proof}
Similar to \eqref{eq:est1}, we can simplify the expression \eqref{eq:Theta0J} as follows,
\begin{equation}
\Theta_{0,J}(0) = \frac{1}{2\pi}\int_{0}^{2\pi}e^{iJ\xi}\left[ Q_{1}^{\intercal}\Widehat{\cal A}Q_{1} - (Q_{1}^{\intercal}\Widehat{\cal A}^{-1}Q_{1})^{-1}\right] \text{ d}\xi.
\label{eq: theta0J0}
\end{equation}
Direct calculation yields,
\begin{equation}
\frac{1}{2\pi}\int_{0}^{2\pi}e^{iJ\xi}Q_{1}^{\intercal}\Widehat{\cal A}Q_{1}\text{ d}\xi = \frac{\kappa_{1}+2\kappa_{2}}{\pi}\int_{0}^{2\pi}\cos(J\xi)\left( 1-\cos(\xi)\right) \text{ d}\xi.
\end{equation}
When $J$ is an integer greater than $1$, the integral is zero. 

Notice that $\left(Q_{1}^{\intercal}\Widehat{\cal A}^{-1}Q_{1}\right)^{-1}$ is even and $F(\xi)$ is periodic. By integration by parts repeatedly, one can have the estimate,
\begin{equation}\begin{aligned}
\Theta_{0,J}(0) &= -\frac{1}{2\pi}\int_{0}^{2\pi} F(\xi)\cos(J\xi) \text{ d}\xi \\
&=\frac{1}{J} \left[ \frac{1}{2\pi}\int_{0}^{2\pi} F'(\xi)\sin(J\xi) \text{ d}\xi\right] \\
&= \frac{1}{J^{2}} \left[ \frac{1}{2\pi}\int_{0}^{2\pi} F''(\xi)\cos(J\xi) \text{ d}\xi \right]\\
&= \frac{1}{J^{3}} \left[ -\frac{1}{2\pi}\int_{0}^{2\pi} F'''(\xi)\sin(J\xi) \text{ d}\xi \right]\\
&= \quad \cdots
\end{aligned}
\label{eq:spaceest}
\end{equation}
So $|\Theta_{0,J}(0)|\leq o(|J|^{-n})$ for any integer order $n$.
%
%
\end{proof}
\qed

To verify the fast decay, we used the same model as the previous numerical tests, and plotted the kernel function v.s. $J$ on a log scale. This is shown in Figure \ref{fig:spacedecay}. In fact, we observe an exponential decay.
\begin{figure}[h]
\begin{center}
\includegraphics[width=0.55\textwidth,height=0.45\textwidth]{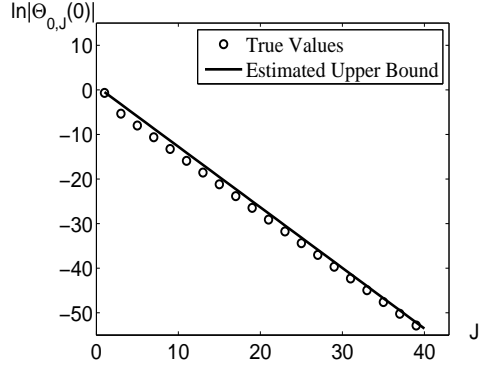}
\caption{This figure shows the spatial decay of the entry $\Theta_{0,J}$ on the space. With the fixed total number of atoms $N=2^{15}$ and averaging block size $M=2^{5}$, we observe the dependence of $\Theta_{0,J}(0)$ on space index $J$ for the piecewise constant coarse-grained averaging operator and it shows exponential decay. The $x$-axis indicates the matrix index $J$ and the $y$-axis is the logarithms of the absolute values of $\Theta_{0,J}(0)$. 
}
\label{fig:spacedecay}   
 \end{center}   
\end{figure}

\subsection{The dependence of $\Theta_{0,0}(t)$ on time $t$}
Finally, we analyze how the kernel function changes in time. Here we will focus on the time decay estimate when the averaging operators are piecewise constant. Namely our aim is to estimate the asymptotic behavior of
\begin{equation}
\Theta_{(0,0)}(t) = \frac{1}{2\pi}\int_{0}^{2\pi} \Widehat{\Phi}^*{\Widehat{\cal A}}\Widehat{\Psi}\cos(\Widehat{\Omega} t)\Widehat{\Omega}^{-2}\Widehat{\Psi}^{*}\Widehat{\cal A}\Widehat{\Phi} ~d\xi,
\label{eq:estt}
\end{equation}
as $t \to +\infty$.

\begin{theorem} When the coarse-grained variables are defined by piecewise constant weighting functions, the kernel function decays in time with rate at least equal to $0.5$. Namely,
 \begin{equation}
\left|\Theta_{0,0}(t)\right| = \mathcal{O}\left(t^{-1/2}\right) \; {\rm as } \;\;t \rightarrow +\infty.
\end{equation}
\label{thm: timedecay}
\end{theorem}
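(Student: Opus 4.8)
The plan is to pass from the matrix-valued oscillatory integral \eqref{eq:estt} to a finite sum of scalar oscillatory integrals by diagonalizing the symbol, and then to read off the temporal decay from the stationary points of the resulting dispersion branches via the method of stationary phase. Starting from \eqref{eq:estt}, for the piecewise constant averaging the symbols are constant, $\Widehat{\Phi}=Q_1$ and $\Widehat{\Psi}=Q_2$, so $\Widehat{\Omega}^2(\xi)=Q_2^{\intercal}\Widehat{\cal A}(\xi)Q_2$. I would write the spectral decomposition $\Widehat{\Omega}^2(\xi)=\sum_{k=1}^{M-1}\omega_k(\xi)^2\,w_k(\xi)w_k(\xi)^{*}$ with $\{w_k(\xi)\}$ an orthonormal eigenbasis, so that $\cos(\Widehat{\Omega}t)\Widehat{\Omega}^{-2}=\sum_k \omega_k^{-2}\cos(\omega_k t)\,w_kw_k^{*}$. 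Substituting collapses the quadratic form onto each eigenvector (using that $\Widehat{\cal A}$ is Hermitian and $Q_1,Q_2$ are real) and gives
\begin{equation}
\Theta_{0,0}(t)=\frac{1}{2\pi}\int_{0}^{2\pi}\sum_{k=1}^{M-1}\rho_k(\xi)\cos\bigl(\omega_k(\xi)t\bigr)\,d\xi,\qquad \rho_k(\xi)=\frac{\bigl|Q_1^{\intercal}\Widehat{\cal A}Q_2\,w_k(\xi)\bigr|^2}{\omega_k(\xi)^2}\ge 0,
\end{equation}
a finite sum of scalar oscillatory integrals with real phases $\omega_k$ and smooth nonnegative amplitudes $\rho_k$.

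The next step is to justify the regularity that stationary phase requires. Since $\Widehat{\cal A}(\xi)=A_0+e^{i\xi}A_1+e^{-i\xi}A_1^{\intercal}$ is entrywise real-analytic and $2\pi$-periodic, so is the Hermitian family $Q_2^{\intercal}\Widehat{\cal A}Q_2$. Moreover, by Lemma \ref{lemma:eigenA} the only null direction of $\Widehat{\cal A}$ on $(0,2\pi]$ occurs at $\xi=2\pi$ and is spanned by $Q_1$, which is orthogonal to the range of $Q_2$; hence $Q_2^{\intercal}\Widehat{\cal A}Q_2$ is positive definite for \emph{every} $\xi$, so that $\omega_k(\xi)>0$ throughout and the amplitudes $\rho_k$ carry no singularity. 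By the analytic perturbation theory of Rellich for real-analytic Hermitian families on an interval, the eigenvalues $\omega_k^2(\xi)$ and an eigenbasis $w_k(\xi)$ can be chosen real-analytic and $2\pi$-periodic even across eigenvalue crossings, so each $\omega_k$ and $\rho_k$ is smooth and periodic on the circle.

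With this in hand I would estimate each $I_k(t)=\int_0^{2\pi}\rho_k(\xi)\cos(\omega_k(\xi)t)\,d\xi$ using a partition of unity adapted to the critical set of $\omega_k$. On any arc where $\omega_k'\neq 0$, repeated integration by parts (the non-stationary phase argument) yields $O(t^{-N})$ for every $N$. Near a nondegenerate stationary point $\xi_*$, where $\omega_k'(\xi_*)=0$ but $\omega_k''(\xi_*)\neq 0$, the classical stationary-phase lemma gives a contribution of size $C\,t^{-1/2}$. Because $\omega_k$ is analytic and nonconstant, its stationary points are isolated and finite in number, so summing over them and over the $M-1$ branches produces $\Theta_{0,0}(t)=O(t^{-1/2})$.

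The main obstacle is the last ingredient: the upper bound $t^{-1/2}$ holds only if no branch possesses a \emph{degenerate} stationary point, since a zero of $\omega_k'$ of order $p\ge 2$ (a higher-order van Hove singularity) produces the slower decay $t^{-1/p}$. I therefore expect the crux to be showing that at every point of vanishing group velocity the curvature $\omega_k''$ is nonzero. I would attack this through the band structure: the stationary points are the band edges of the compressed dispersion relation, and for force constants $(\kappa_1,\kappa_2)$ satisfying \eqref{eq:stability} these edges are generically quadratic, giving exactly the square-root singularity responsible for the $t^{-1/2}$ law. Equivalently, one can bypass pointwise nondegeneracy by a van der Corput estimate on each subinterval where $|\omega_k''|$ is bounded below, combined with the non-stationary estimate elsewhere; the degenerate configurations, in which $\omega_k'$ and $\omega_k''$ vanish simultaneously, form a non-generic set in parameter space and would have to be excluded or treated separately.
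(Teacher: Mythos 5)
Your proposal follows the same overall strategy as the paper's proof: diagonalize the symbol $\Widehat{\Psi}^{*}\Widehat{\cal A}\Widehat{\Psi}$, reduce $\Theta_{0,0}(t)$ to a finite sum of scalar oscillatory integrals (your displayed formula is exactly the paper's \eqref{eq:fourierint}, with your $\rho_k=|g_k|^2/\mu_k^2$), and apply stationary phase in the large parameter $t$. Where you genuinely diverge is in how the smooth dispersion branches are secured. The paper proves a non-crossing result (Lemma~\ref{lemma3}): for $M>4$ and $\xi\in(0,2\pi)$ the matrix $\Widehat{\Psi}^{*}\Widehat{\cal A}\Widehat{\Psi}$ has no repeated eigenvalues, established via the secular equation \eqref{eq:10} (obtained from the matrix determinant lemma) together with the Cauchy interlacing theorem; the ordered branches $\mu_j(\xi)$ are then globally smooth and periodic, and the symmetry $\mu_j(\xi)=\mu_j(2\pi-\xi)$ pins down the stationary point at $\xi=\pi$. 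You instead invoke Rellich's analytic perturbation theorem for one-parameter Hermitian analytic families, which tolerates crossings. That buys robustness — in particular your argument does not need the restriction $M>4$ under which the paper's non-crossing lemma holds — at the cost of a small technical wrinkle you gloss over: analytic continuation of a branch around the circle may permute the branches, so $2\pi$-periodicity of an individual $\omega_k$ is not automatic; this is harmless here because the sum over branches is label-invariant, and a stationary point sitting at the endpoint $\xi=0\equiv 2\pi$ contributes a half stationary-phase term, still $\mathcal{O}(t^{-1/2})$. You also lose the paper's explicit identification of where a stationary point must occur, replacing it with the generic fact that a nonconstant analytic phase has finitely many isolated critical points — which suffices for an upper bound.

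On the nondegeneracy issue you flag as the crux: you are right that the $\mathcal{O}(t^{-1/2})$ \emph{upper} bound fails at a stationary point of order $p\ge 3$, and you should know the paper does not close this gap either. Its stationary-phase lemma explicitly assumes $\omega''(x_0)\neq 0$, yet the proof never verifies $\mu_j''(\pi)\neq 0$ analytically; the remark following the proof only records that $\mu_1''(\pi)\neq 0$ is observed numerically. So your candid accounting — van der Corput estimates on subintervals where $|\omega_k''|$ is bounded below, with the simultaneous vanishing of $\omega_k'$ and $\omega_k''$ excluded as non-generic in $(\kappa_1,\kappa_2)$ or treated separately — is an honest statement of what both arguments actually require, and in that respect your proposal is on par with, and arguably more transparent than, the published proof.
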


Let $\Widehat{\Psi}^{*}\Widehat{\cal A}\Widehat{\Psi}=X^{*}M^{2}X$ be the eigen decomposition of $\Widehat{\Psi}^{*}\Widehat{\cal A}\Widehat{\Psi}$, where $M$ is a real diagonal matrix and $X$ is a unitary matrix. Then we are able to write \eqref{eq:estt} as
\begin{equation}
\Theta_{0,0}(t) = \frac{1}{2\pi}\int_{0}^{2\pi} \sum_{j=0}^{M-1} \frac{\cos(\mu_{j}t)|g_{j}|^{2}}{\mu_{j}^{2}}~d\xi,
\label{eq:fourierint}
\end{equation}
where $\mu_{j}=M_{j,j}$ is an eigenvalue of $\Widehat{\Omega}$ and $g_{j}$ is the $j$th component of $X\Widehat{\Psi}^{*}\Widehat{\cal A}\Widehat{\Phi}$. 

Notice that
\begin{equation}
\Theta_{0,0}(t) = \sum_{j=0}^{M-1} \text{Re} \left\{ \frac{1}{2\pi} \int_{0}^{2\pi} \frac{|g_{j}|^{2}}{\mu_{j}^{2}}e^{i\mu_{j}t} ~d\xi \right\}.
\end{equation}
So this is an integral of Fourier type and we are regarding $t$ as a large parameter. In this regime, the integrand oscillates severely and causes cancellation. The stationary phase approximation method can be used here to estimate the decay rate of \eqref{eq:fourierint}.

The stationary phase method is an approach for estimating integrals with fast oscillation by evaluating the integrands in regions where they contribute the most. Let's recall the stationary phase approximation method \cite{bleistein1975asymptotic} with following lemma.

\begin{lemma}
Consider the integral,
\begin{equation}
I(t) = \int_{a}^{b} f(x)e^{i\omega(x)t} \text{ d}x ,
\end{equation}
where $f$ and $\omega$ are real-valued functions. $\omega$ is called phase function. Assume $\omega\in C^{2}(a,b)$ and has one stationary point at $x=x_{0}\in (a,b)$, where $\omega'(x_{0})=0$ and $\omega''(x_{0})\neq 0$. Then the integral can be approximated asymptotically by
\begin{equation}
I(t) \approx e^{i\omega(x_{0})t}e^{\text{sign}(\omega''(x_{0}))\frac{i\pi}{4}} f(x_{0})\left[ {\frac{2\pi}{t|\omega''(x_{0})|}} \right]^{\frac{1}{2}}
\end{equation}
as $t \rightarrow \infty$.
\end{lemma}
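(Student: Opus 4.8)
The plan is to prove the classical stationary phase formula in three stages: localize the integral to a small neighborhood of the stationary point $x_0$, normalize the phase there to a pure quadratic by a change of variables, and evaluate the resulting Fresnel integral in closed form. Throughout I will treat ``$\approx$'' as the statement that the displayed quantity is the leading asymptotic term as $t\to+\infty$, the remainder being $o(t^{-1/2})$ (in fact $O(t^{-1})$), and I will assume $f$ and $\omega$ are slightly more regular than $C^2$ near $x_0$, which holds in our application.

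First I would isolate the stationary point. Fix a small $\delta>0$ and a smooth cutoff $\chi$ equal to $1$ on $[x_0-\delta,x_0+\delta]$ and vanishing outside $[x_0-2\delta,x_0+2\delta]$, so that $I(t)=I_{\text{near}}(t)+I_{\text{far}}(t)$ with $I_{\text{near}}=\int \chi f\,e^{i\omega t}\,dx$. On the support of $1-\chi$ we have $\omega'\neq 0$, so writing $e^{i\omega t}=(it\omega')^{-1}\frac{d}{dx}e^{i\omega t}$ and integrating by parts once shows $I_{\text{far}}(t)=O(t^{-1})$, negligible against the claimed $t^{-1/2}$ leading term. This is the non-stationary-phase principle, and it is what confines the asymptotics to an arbitrarily small neighborhood of $x_0$.

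Next I would normalize the phase. Since $\omega''(x_0)\neq 0$, the Morse lemma furnishes a smooth diffeomorphism $x\mapsto u$ of a neighborhood of $x_0$ onto a neighborhood of $0$, with $u(x_0)=0$, such that
\[
\omega(x)-\omega(x_0)=\tfrac{1}{2}\,\text{sign}(\omega''(x_0))\,u^{2}.
\]
Differentiating twice at $x_0$ gives $\omega''(x_0)=\text{sign}(\omega''(x_0))\,\big(u'(x_0)\big)^2$, hence $u'(x_0)=\sqrt{|\omega''(x_0)|}$ and the Jacobian satisfies $dx/du=1/\sqrt{|\omega''(x_0)|}$ at $u=0$. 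Setting $s=\text{sign}(\omega''(x_0))$ and $g(u)=\chi(x(u))f(x(u))\,(dx/du)$, the near contribution becomes $I_{\text{near}}(t)=e^{i\omega(x_0)t}\int g(u)\,e^{isu^{2}t/2}\,du$, with $g(0)=f(x_0)/\sqrt{|\omega''(x_0)|}$.

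Finally I would extract the leading term from the Fresnel integral. Replacing $g(u)$ by $g(0)$ and extending the range to all of $\mathbb{R}$, the leading term is $g(0)\int_{-\infty}^{\infty}e^{isu^{2}t/2}\,du$; this Fresnel integral is evaluated by rotating the contour by $s\pi/4$ and reducing to the real Gaussian $\int e^{-\alpha u^{2}}\,du=\sqrt{\pi/\alpha}$, yielding $\sqrt{2\pi/t}\;e^{s i\pi/4}$. Multiplying the three factors $e^{i\omega(x_0)t}$, $g(0)$, and the Fresnel value reproduces the stated formula. The step I expect to require the most care is the error control in this last stage: one must justify both the replacement of $g$ by $g(0)$ (the difference $g(u)-g(0)=O(u)$ contributes only $O(t^{-1})$, again by integration by parts) and the extension of the integration range, and the contour rotation itself needs the standard regularization—inserting $e^{-\varepsilon u^{2}}$ and letting $\varepsilon\to 0^{+}$—since the Fresnel integral converges only conditionally.
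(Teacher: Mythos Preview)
Your argument is the standard and correct derivation of the stationary phase formula, but there is nothing in the paper to compare it against: the paper does not prove this lemma at all. It is quoted as a classical result with a citation to Bleistein and Handelsman \cite{bleistein1975asymptotic}, and is used only as a black box in the proof of Theorem~\ref{thm: timedecay}. So your localization/Morse-lemma/Fresnel decomposition is a genuine addition rather than a reproduction of anything the authors wrote; it supplies what the paper simply imports from the literature.
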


Up to now, we only know $\left\{ \mu_{j}(\xi) \right\}_{j=1}^{M-1}$ as the eigenvalues of $\Widehat{\Psi}^{*}\Widehat{\cal A}\Widehat{\Psi}$ for given $\xi$. Applying the stationary phase method would require the presence of stationary points. For this purpose, we need to label the eigenvalues with proper indices, so that the function $\mu_{j}(\xi)$ is continuous (or even smooth depending on $\Widehat{\cal A}$) w.r.t $\xi$. $\mu_{j}$ will be called branches of eigenvalues, as depicted in Figure \ref{fig: eig3}. 
\begin{figure}[h]
\begin{center}
\includegraphics[width=0.45\textwidth]{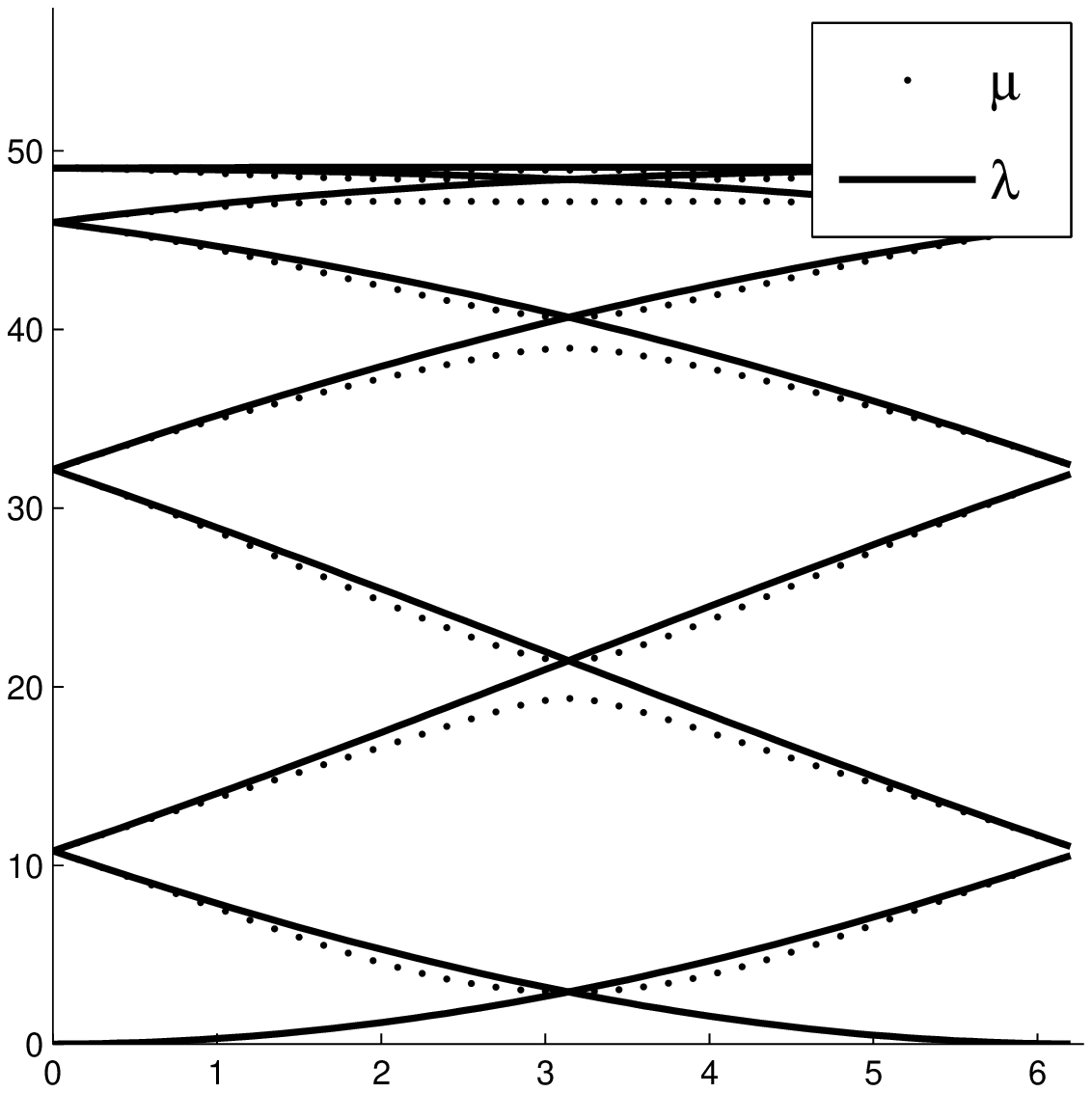}
\includegraphics[width=0.45\textwidth]{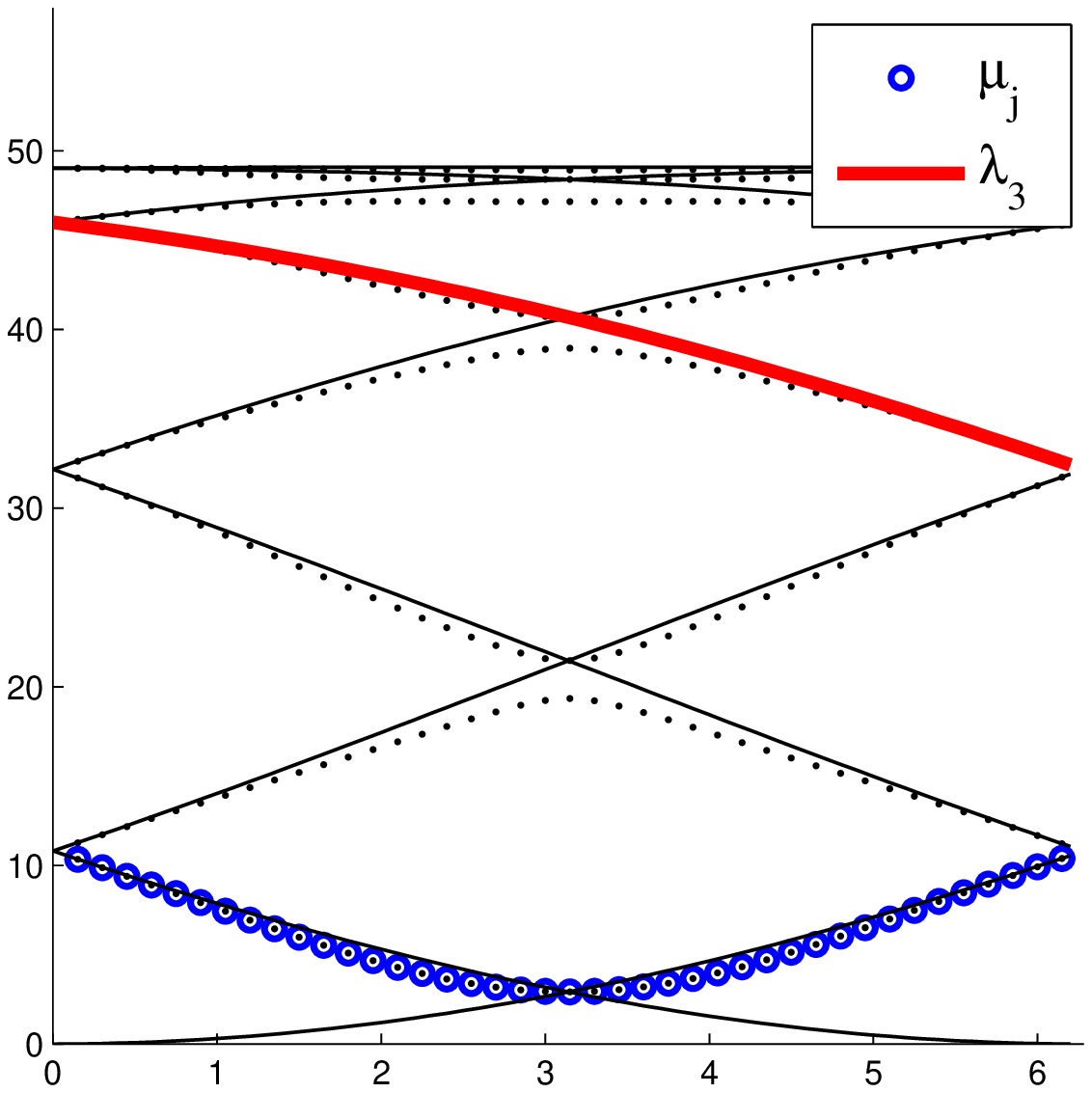}
\caption{These figures show eigenvalues of $\Widehat{\Psi}^{*}\Widehat{\cal A}\Widehat{\Psi}$ labelled with $\mu$ and eigenvalues of $\Widehat{\cal A}$ labelled with $\lambda$ when $M=9$. It shows the interlacing properties of the eigenvalues. The right figure highlights two branches of the eigenvalues of $\Widehat{\Psi}^*\Widehat{\cal A}\Widehat{\Psi}$ and $\Widehat{\cal A}$ respectively. 
}
\label{fig: eig3}
\end{center}
\end{figure}

Now let's take a further look at eigenvalues of $\Widehat{\Psi}^{*}\Widehat{\cal A}\Widehat{\Psi}$. Let $\mu$ be an eigenvalue of $\Widehat{\Psi}^* \Widehat{\cal A} \Widehat{\Psi}$ and ${x}$ be its corresponding eigenvector, i.e.,
\begin{equation}
 \Widehat{\Psi}^* \Widehat{\cal A} \Widehat{\Psi} {x} = \mu {x}, \quad |{x}| = 1.
 \label{eq:eigenprob}
\end{equation}

In the proof of Theorem \ref{thm: 2}, we know $\Widehat{\Psi}^{*}\Widehat{\cal A}\Widehat{\Psi}$ is non-sigular, so $\mu \neq 0$. In addition, for any two matrices $C\in \mathbb{C}^{k\times l}$ and $D \in \mathbb{C}^{l \times k}$, $CD$ and $DC$ have the same eigenvalues (counting multiplicities) except for the zero eigenvalue \cite{horn2012matrix}. In other words, $\mu$ is also a non-zero eigenvalues of $\Widehat{\cal A} \Widehat{\Psi} \Widehat{\Psi}^*$. So the original eigenvalue problem \eqref{eq:eigenprob} is equivalent to 
\begin{equation}
\Widehat{\cal A} \Widehat{\Psi} \Widehat{\Psi}^* {y} = \mu {y}, \quad \mu \neq 0,
\quad |{y}| = 1. 
\label{eq2}
\end{equation}

Let $\Widehat{\cal A}=V\Lambda V^{*}$ be the eigen decomposition of $\Widehat{\cal A}$, then the eigenvalue problem is equivalent to solving $\mu$ in the equations as follows,
\begin{equation}
 \left( \Lambda - \Lambda V^* \Widehat{\Phi} \Widehat{\Phi}^* V \right) \widetilde{{y}} = \mu \widetilde{{y}}, \quad \mu \neq 0,
 \label{eq:eigenB}
\end{equation}
where $\widetilde{{y}}=V^{*}{y}$.

The matrix determinant Lemma {\cite{matrixlemma}} states that if $B$ is a complex square matrix and ${u},{v}$ are column vectors, then the following identity holds,
\begin{equation}
 \det (B + {u}{v}^*) = \det (B) + {v}^* \text{adj}(B) {u},
\end{equation}
where $\det(B)$ is the determinant of $B$ and $\text{adj}(B)$ is the adjugate matrix of $B$. To deal with the eigenvalue problem in \eqref{eq:eigenB}, let the matrix $B=\Lambda - \Lambda V^* \Widehat{\Phi} \Widehat{\Phi}^* V$ and apply the matrix determinant Lemma to it. We would have the characteristic polynomial,
\begin{equation}
f(\mu) = \text{det}(\mu I - B) =\det (\mu I - \Lambda) - {z}^* \text{adj}(\mu I - \Lambda) \Lambda {z},
\label{eq:det}
\end{equation}
where ${z} =V^*\Widehat{\Phi}= \frac{1-e^{i \xi}}{M} \left( \frac{1}{1-e^{i\xi_0}}, \frac{1}{1-e^{i\xi_1}}, \cdots, \frac{1}{1-e^{i\xi_{M\text{-}1}}} \right)^*$. Setting the above characteristic polynomial equal to zero, we have
\begin{equation}
 \prod_{j = 0}^{M-1}
 (\mu - \lambda_j) - \sum_{j = 0}^{M-1} \lambda_j | z_j |^2 \left( \prod_{\substack{k=0 \\ k\neq j}}^{M-1}
 (\mu - \lambda_k)\right) = 0, \label{eq:10}
\end{equation}
where $\lambda_j = 2 \kappa_{1} (1 - \cos \xi'_j) + 2 \kappa_{2} (1 - \cos 2 \xi'_j)$ are the eigenvalues of $\Widehat{\cal A}$. 

From the fundamental Theorem of Algebra one can define $\mu(\xi)$ as a branch of non-trivial solutions to \eqref{eq:10}. Together with the explicit form of $\lambda_{j}$ and $z_{j}$, we know $\mu(\xi)$ is smooth.

We may enforce an increasing order to label eigenvalues, for instance, $\mu_{0}\leq \mu_{1} \leq \cdots \leq \mu_{M-2}$. However, when two branches intersect with each other, this labelling may break down the smoothness of branches. Figure \ref{fig: eig} further demonstrates one possible scenario.

Fortunately, we have, 
\begin{figure}[h]
\begin{center}
\includegraphics[width=0.4\textwidth,height=0.3\textwidth]{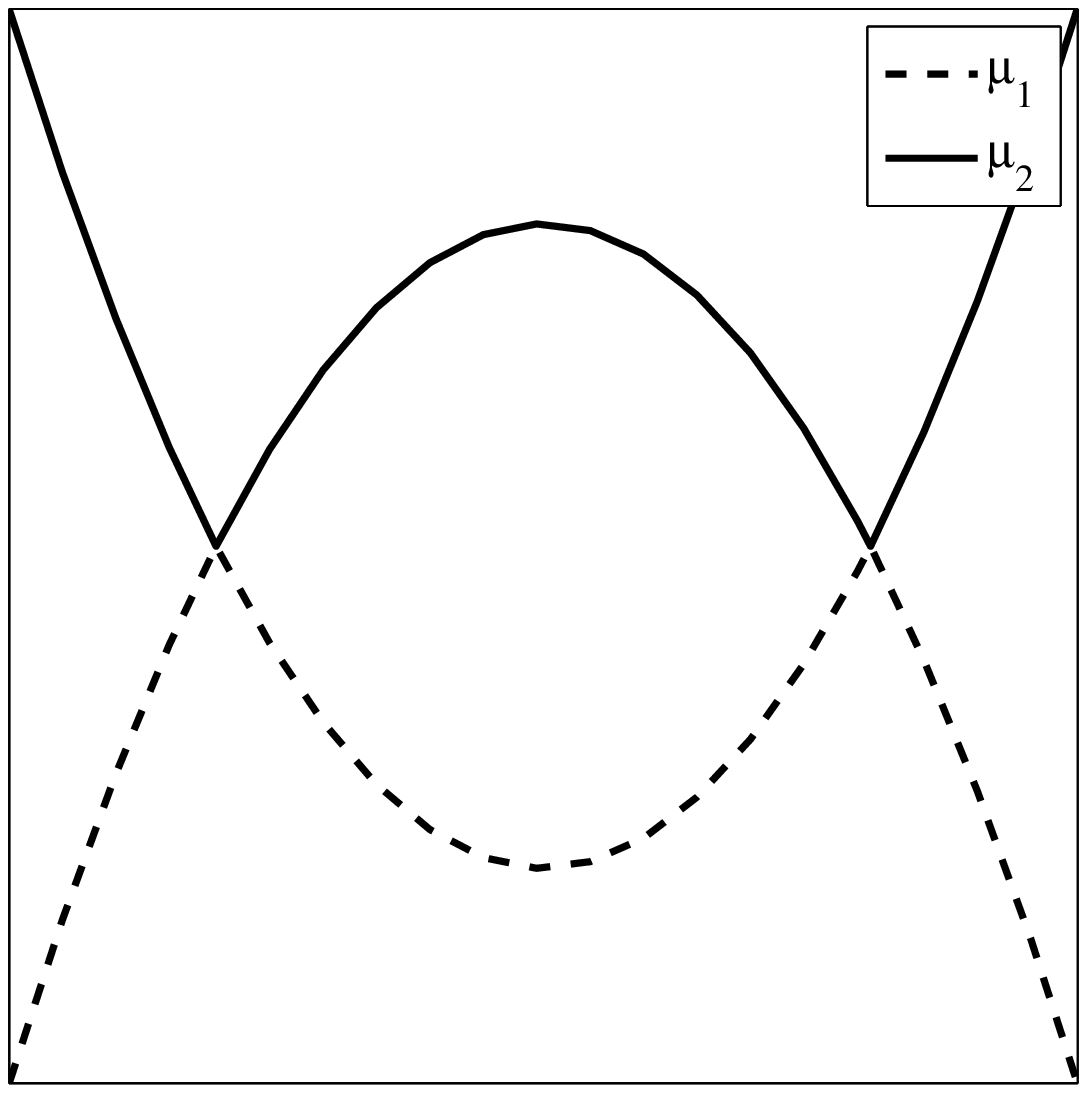}
\includegraphics[width=0.4\textwidth,height=0.3\textwidth]{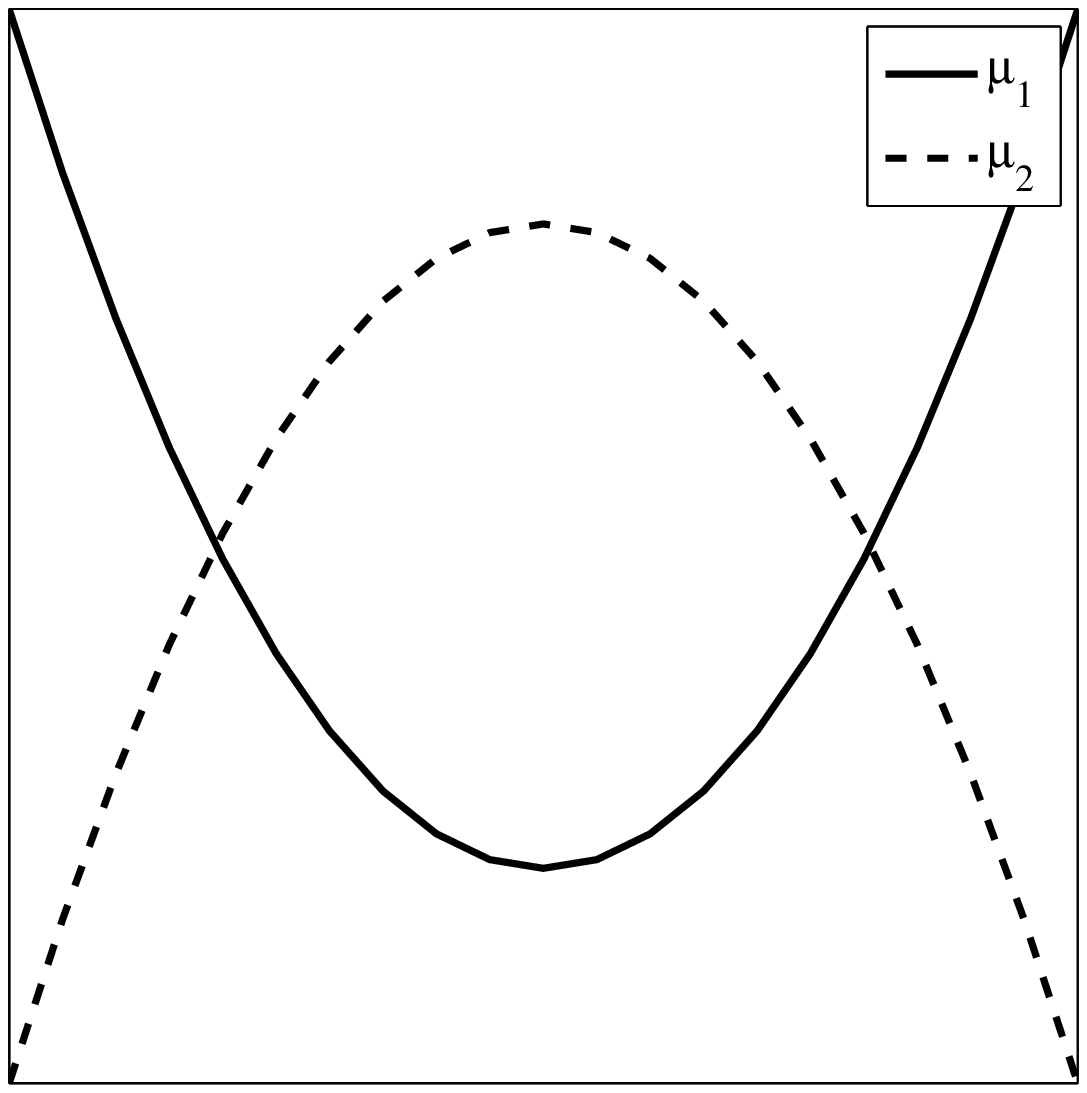}
\caption{Both figures have the same two branches of eigenvalues but labelled in different ways. Eigenvalues in the left figure are labelled using the way described in the context satisfying $\mu_{1}\leq \mu_{2}$ but it doesn't guarantee global smoothness. The right figure shows a more natural way to number eigenvalues and it preserves the global smoothness.}
\label{fig: eig}
\end{center}
\end{figure}

\begin{lemma}
For $M>4$ and $\xi\in (0,2\pi)$, the matrix $\Widehat{\Psi}^{*}\Widehat{\cal A}\Widehat{\Psi}$ does not have repeated eigenvalues.
\label{lemma3}
\end{lemma}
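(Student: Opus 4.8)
The plan is to analyze the spectrum of $T:=\Widehat{\Psi}^{*}\Widehat{\cal A}\Widehat{\Psi}$ through the secular equation \eqref{eq:10}, viewing $T$ as the compression of $\Widehat{\cal A}$ to the hyperplane $\Widehat{\Phi}^{\perp}$ (equivalently, $\Widehat{\cal A}\Widehat{\Psi}\Widehat{\Psi}^{*}=\Widehat{\cal A}(I-\Widehat{\Phi}\Widehat{\Phi}^{*})$ is a rank-one update of $\Widehat{\cal A}$). Let $\nu_{1}<\cdots<\nu_{p}$ be the \emph{distinct} eigenvalues of $\Widehat{\cal A}$, with multiplicities $m_{1},\dots,m_{p}$, and set the weights $w_{i}=\sum_{\lambda_{j}=\nu_{i}}|z_{j}|^{2}=\|P_{i}\Widehat{\Phi}\|^{2}$, where $P_{i}$ is the orthogonal projection onto the $\nu_{i}$-eigenspace and $z_{j}$ is as in \eqref{eq:det}. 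Grouping the terms of \eqref{eq:10} by distinct eigenvalue and dividing by $\prod_{i}(\mu-\nu_{i})$ recasts it as a secular equation built from $g(\mu)=\sum_{i}\nu_{i}w_{i}/(\mu-\nu_{i})$. Standard rank-one perturbation theory then splits the spectrum of $T$ into two families: the \emph{retained} eigenvalues, in which each $\nu_{i}$ with $w_{i}>0$ reappears with multiplicity $m_{i}-1$, and the \emph{secular} eigenvalues, the roots of $g(\mu)=\text{const}$ lying strictly between consecutive active poles. I would prove the lemma by showing these two families never collide and are internally simple.

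The first two steps settle everything except a multiplicity bound on $\Widehat{\cal A}$. First I would show every weight is positive on $(0,2\pi)$: by the explicit formula for $z=V^{*}\Widehat{\Phi}$ following \eqref{eq:det}, each entry is proportional to $1-e^{i\xi}$, which is nonzero exactly when $\xi\neq0\pmod{2\pi}$; hence $z_{j}\neq0$ for all $j$ and $w_{i}>0$ for every $i$. Since $\Widehat{\cal A}$ is positive definite on $(0,2\pi)$ by Lemma \ref{lemma:eigenA} and \eqref{eq:stability}, each coefficient $\nu_{i}w_{i}>0$, so $g'(\mu)=-\sum_{i}\nu_{i}w_{i}/(\mu-\nu_{i})^{2}$ has one fixed sign on each pole interval. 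Thus $g$ is strictly monotone between consecutive poles and contributes exactly one simple root to each interior gap, strictly separated from all $\nu_{i}$; the secular eigenvalues are therefore mutually distinct and disjoint from the retained ones. Consequently the spectrum of $T$ is simple \emph{if and only if} no retained eigenvalue is itself repeated, i.e. if and only if every eigenvalue of $\Widehat{\cal A}$ has multiplicity at most two, since a multiplicity $m_{i}\ge3$ forces a retained multiplicity $m_{i}-1\ge2$.

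The third step is the multiplicity bound, which I reduce to the algebra of the dispersion relation. By Lemma \ref{lemma:eigenA}, $\lambda_{j}=q(\cos\xi_{j}')$ with $q(c)=-4\kappa_{2}c^{2}-2\kappa_{1}c+2\kappa_{1}+4\kappa_{2}$, a quadratic with vertex $c^{*}=-\kappa_{1}/(4\kappa_{2})$, and the nodes $\xi_{j}'=(2j\pi-\xi)/M$ are distinct mod $2\pi$. A triple coincidence $\lambda_{j_{1}}=\lambda_{j_{2}}=\lambda_{j_{3}}$ forces, since $q$ is quadratic, two of the values $\cos\xi_{j}'$ to be equal, hence $\xi_{j_{a}}'\equiv-\xi_{j_{b}}'\pmod{2\pi}$; a short computation gives $\xi\equiv(j_{a}+j_{b})\pi\pmod{M\pi}$, which on $(0,2\pi)$ can only hold at $\xi=\pi$. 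So for $\xi\in(0,2\pi)\setminus\{\pi\}$ all multiplicities are at most two and the lemma follows from the previous paragraph. I would also clear two whole parameter ranges at once: if $\kappa_{2}\le0$, or if $\kappa_{2}>0$ with $\kappa_{1}\ge4\kappa_{2}$, then \eqref{eq:stability} places $c^{*}$ outside $(-1,1)$, so $q$ is strictly monotone on $[-1,1]$ and injective on the node cosines; no coincidence is then possible and the multiplicities stay at most two for \emph{all} $\xi$, including $\xi=\pi$.

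The remaining case, which I expect to be the main obstacle, is $\xi=\pi$ in the regime $\kappa_{2}>0$, $\kappa_{1}<4\kappa_{2}$. At $\xi=\pi$ the nodes pair up, so each $\cos\xi_{j}'$ occurs twice (with the single exception of the value $-1$ when $M$ is odd) and every eigenvalue already has multiplicity two; a multiplicity $\ge3$ arises precisely when two \emph{distinct} node cosines $c_{1}\neq c_{2}$ satisfy $q(c_{1})=q(c_{2})$, equivalently $c_{1}+c_{2}=2c^{*}=-\kappa_{1}/(2\kappa_{2})$. By product-to-sum the admissible node-sums are
\[
\cos\tfrac{(2a-1)\pi}{M}+\cos\tfrac{(2b-1)\pi}{M}=2\cos\tfrac{(a-b)\pi}{M}\cos\tfrac{(a+b-1)\pi}{M},
\]
and completing the proof amounts to showing that none of these finitely many values equals the single number $-\kappa_{1}/(2\kappa_{2})\in(-2,0)$ under \eqref{eq:stability}. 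This is exactly where the hypothesis $M>4$ and the stability constraints must be combined to exclude the coincidence; I regard controlling these exceptional force-constant ratios, rather than the interlacing machinery (which is routine), as the crux, and would attack it by bounding the attainable node-sums away from $-\kappa_{1}/(2\kappa_{2})$ using the explicit Chebyshev-node structure together with $M>4$.
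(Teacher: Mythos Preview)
Your secular-equation/rank-one framework is sound and yields the clean reduction ``$T=\Widehat{\Psi}^{*}\Widehat{\cal A}\Widehat{\Psi}$ has simple spectrum iff every eigenvalue of $\Widehat{\cal A}$ has multiplicity at most two'', which is equivalent to what the paper extracts from Cauchy interlacing together with \eqref{eq:10}. Up to this point the two routes agree, and yours is arguably tidier.

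The gap is the endgame at $\xi=\pi$. From $m_i\ge3$ you obtain only \emph{two} coincident node cosines, which forces $\xi=\pi$ but gives no contradiction there; your proposed fix---excluding the finitely many node-sums from the value $-\kappa_1/(2\kappa_2)$---is not carried out, and since that target sweeps all of $(-2,0)$ as $(\kappa_1,\kappa_2)$ ranges over the stability region \eqref{eq:stability}, a uniform exclusion is not available (for instance at $M=5$ the admissible sum $\cos(\pi/5)+(-1)=(\sqrt5-3)/4\in(-2,0)$ is attained for stable parameters). The paper never isolates $\xi=\pi$: instead of stopping at three coincident $\lambda_j$'s, it \emph{iterates} the factor-and-evaluate step on \eqref{eq:10}, using positivity of the accumulated weights at each stage to force yet another $\lambda_k=c$, and concludes that \emph{all} eigenvalues of $\Widehat{\cal A}$ equal $c$. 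With that stronger conclusion, the hypothesis $M>4$ and pigeonhole into the at-most-two-element preimage $q^{-1}(c)$ produce \emph{three} indices with the same cosine, which is impossible for every $\xi\in(0,2\pi)$. That escalation from ``some $m_i\ge3$'' to ``all $\lambda_j$ equal'' is the idea you are missing; it is also precisely where $M>4$ enters the paper's proof, replacing your parameter-dependent analysis at $\xi=\pi$ with a uniform combinatorial contradiction.
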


\begin{proof}
Suppose on the contrary at $\xi=\eta$, the matrix $\Widehat{\Psi}^{*}\Widehat{\cal A}\Widehat{\Psi}$ has a repeated eigenvalue $c$. This means that for some $j,$ $\mu_{j-1}=\mu_{j}=c$. Recall $\lambda_{j}$'s are eigenvalues of $\Widehat{\cal A}$ and explicitly given in Lemma \ref{lemma:eigenA}. From Cauchy interlacing theorem {\cite{interlacing}} for eigenvalues, at $\xi=\eta$, one has 
\begin{equation}
\lambda_{i_{0}} \leq \mu_{0} \leq \lambda_{i_1} \leq \mu_{1} \leq \cdots \leq \lambda_{i_{M-2}}\leq \mu_{M-2} \leq \lambda_{i_{M-1}},
\label{eq: interlacing}
\end{equation}
where $\{{i_j}\}_{j=0}^{M-1}$ is a permutation of $\{{j}\}_{j=0}^{M-1}$ subject to $\lambda_{i_{j-1}}\leq \lambda_{i_j}$. Then it is clear that $\lambda_{i_j}=c$ as well. 

With a substitution of $\mu=c$ and $\xi=\eta$ into \eqref{eq:10}, one has,
\begin{equation}
	w_{i_{j}}(\eta)\prod_{\substack{k=0 \\ k\neq i_{j}}}^{M-1} (c-\lambda_{k}(\eta)) = 0,
\end{equation}
where $w_{j}(\xi)=\lambda_j |z_j|^2 = \frac{2 (1 - \cos \xi)}{M^2}[\kappa_{1} + 2 \kappa_{2} (1 + \cos\xi'_j)]$. 

Since $\eta\neq 0$, $w_{j}(\eta)$ is a positive number, so there must be at least one term in the product equal to zero and let $\lambda_{l}(\eta) = c~(l\neq i_{j})$, then one can factor $\mu-c$ out from \eqref{eq:10} and evaluate the equation at $\mu=c$ and $\xi=\eta$, 
\begin{equation}
(w_{i_{j}}(\eta)+w_{l}(\eta))\prod_{\substack{ k=0\\ k\neq i_{j},l}}^{M-1}(c-\lambda_{k}) = 0.
\end{equation}
Due to the fact that $w_{i_{j}}(\eta)$ and $w_{l}(\eta)$ are positive, there exists another index $h$ different from $i_{j}$ and $l$ s.t. $\lambda_{h}(\eta)=c$. Repeating above steps, one will have all eigenvalues are equal to $c$, which is impossible as explained next.


Let's write $\lambda_{k}=-4\kappa_{2}\cos^{2}\xi'_{k}-2\kappa_{1}\cos\xi'_{k}+2\kappa_{1}+4\kappa_{2},~\xi'_{k} = \frac{-\xi+2j\pi}{M}
$ as a quadratic function,
\begin{equation}
f(x)=-4\kappa_{2}x^{2}-2\kappa_{1}x+2\kappa_{1}+4\kappa_{2},
\end{equation} 
when $x=\cos\xi'_{k}$. 

When the block size $M$ is greater than $4$, there must be at least 3 of $\{ x_{0}, x_{1}, \cdots, x_{M-1}\}$ equal to a same number, $d$. However, the equation
\begin{equation}
\cos\xi'_{j}=\cos\frac{-\xi+2j\pi}{M}=d,
\end{equation}
has $2$ different solutions at most when $\xi\in(0,2\pi)$. This implies the contradiction. 
\qed
\end{proof}

With the help of Lemma \ref{lemma3}, it's easy to prove Theorem \ref{thm: timedecay}.

\begin{proof}
From Lemma \ref{lemma3} and \eqref{eq:10}, one can see branches of eigenvalues are well defined and don't intersect with each other. Since we have $\mu_{j}(\xi)=\mu_{j}(2\pi-xi)$ for each branch, $\mu'_{j}(\pi)=0$ is a stationary point on the interval $(0,2\pi)$. According to the stationary phase method, $\Theta_{0,0}(0)$ is bounded at least by ${\cal{O}}\left( t^{-1/2}\right)$ as $t\rightarrow \infty.$

\end{proof}
\qed
\

\noindent{\bf Remark:} Based on numerical observations, $\mu_{1}''(\pi)$ is always non-zero, so this estimate can not be improved.

Numerical simulations about the dependence of the kernel function on time are shown in Figure \ref{fig:timedecay}.
\begin{figure}[htp]
\begin{center}
\includegraphics[width=0.45\textwidth,height=0.35\textwidth]{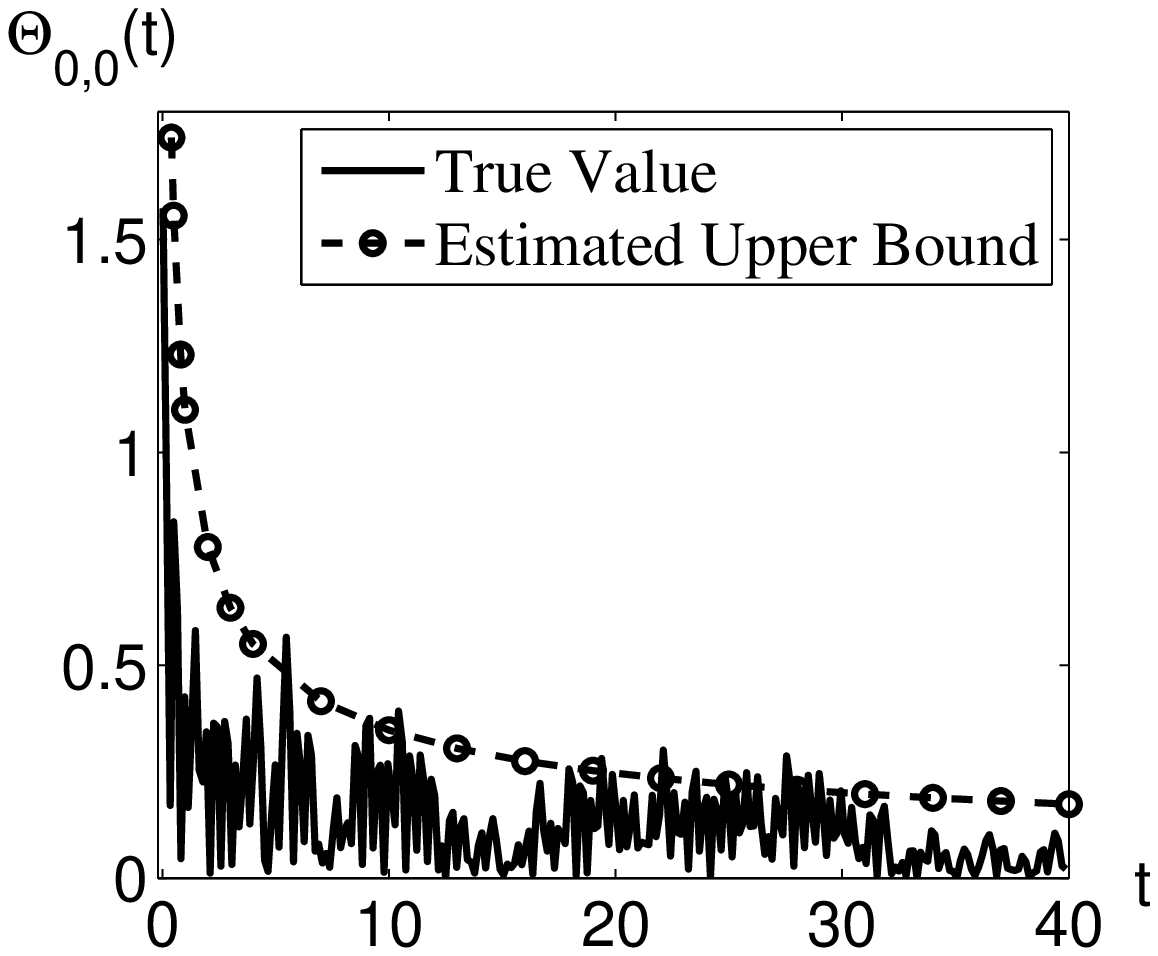}
\includegraphics[width=0.45\textwidth,height=0.35\textwidth]{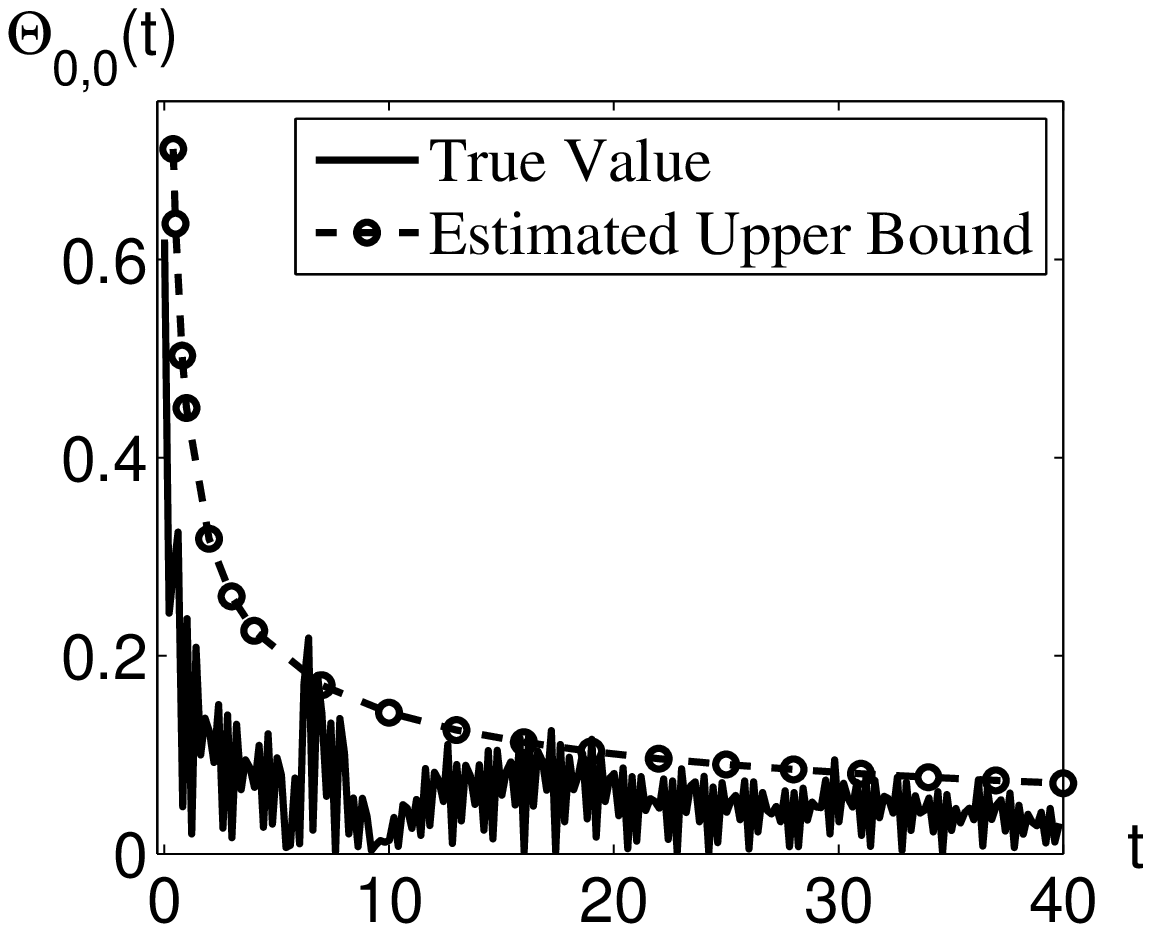}
\caption{These figures show the diagonal entry of kernel function $\Theta_{0,0}(t)$ decays as time approaches infinity with different force constants. $2000$ atoms are taken into consideration and are divided into $20$ blocks evenly. The left figure shows the results when $\kappa_{1} = 12.2676$ and $\kappa_{2} = 3.0628$ and the right one shows the results when $\kappa_{1} = 12.2676$ and $\kappa_{2} = -3$. $\Theta_{0,0}(t)$ is compared with functions in the order of $t^{-1/2}$.}
\label{fig:timedecay}  
 \end{center}     
\end{figure}

\section{Summary and Discussions}
The generalized Langevin equations have recent emerged as a new modeling paradigm as an efficient alternative to full-atom simulations. In principle, the kernel function in the generalized Langevin equations has to come from the underlying atomistic model. In the case of harmonic interactions, the kernel function can be expressed explicitly in terms of a matrix-valued function. This paper analyzed several decay properties. In particular, we found that the magnitude of the kernel function depends on the level of coarse-graining. This suggests that both the memory and random force terms are scale-dependent. 

On the other hand, our analysis also shows that the correlation between the coarse-grained variables decays rapidly with respect to the distance. This supports the typical practice where a cut-off radius is introduced in the modeling of the interactions of the coarse-grained variables. In sharp contrast to the spatial decay, the time decay is rather slow. In fact, the generalized Langevin equation derived by Adelman and Doll \cite{AdDo76,AdDo74}, as well as the model used to fit the experimental data \cite{min2005observation}, also have kernel functions with the similar decay rate. 

The slow decay in the time poses a rather challenging problem for the approximations of the kernel functions. For example, the early t-model approximation \cite{ChSt05} will only be valid for a short time period \cite{zhu2017rigorous}. The Markovian approximation \cite{hijon2010mori,hijon2006markovian} would capture the long-time statistics with a Green-Kubo type of approximations, but the accuracy at the transient time scale is not guaranteed. Beylkin and Monz{\'o}n \cite{beylkin2010approximation} proved a striking result that shows that a power law function can be approximated uniformly by a sum of exponentials for all $t>0.$ This type of approximations have been used in \cite{baczewski2013numerical,fricks2009time,lei2016data,Ma2005}. But no specific error estimate has been provided so far.

A natural extension of the current work is to high dimensional systems. In general, the matrix form of the kernel function still holds. But it is not clear whether the decay rate would be the same. 
 
\bibliographystyle{plain}
\bibliography{gle}


\end{document}